\numberwithin{equation}{section}
\theoremstyle{plain}
\newtheorem{theorem}[equation]{Theorem}
\newtheorem{lemma}[equation]{Lemma}
\theoremstyle{definition}
\theoremstyle{remark}
\newtheorem{remark}[equation]{Remark}
\newcommand{\dist}{\operatorname{dist}}
\newcommand{\mysection}[1]{\section{#1}
\setcounter{equation}{0}}
\newcommand{\bR}{\mathbb R}
\newcommand\wto{\rightharpoonup}
\newcommand\rV{\mathring{V}}
\newcommand\cLt{{}^t\!\mathcal{L}}
\newcommand{\ip}[1]{\left\langle#1\right\rangle}
\newcommand{\set}[1]{\left\{#1\right\}}
\newcommand{\norm}[1]{\lVert#1\rVert}
\newcommand{\abs}[1]{\left\lvert#1\right\rvert}
\renewcommand{\epsilon}{\varepsilon}
\renewcommand{\vec}[1]{\boldsymbol{#1}}
\begin{document}
\title[Green's matrix in two dimensions]
{Green's matrices of second order elliptic systems with measurable coefficients in two dimensional domains}

\author[H. Dong]{Hongjie Dong}
\address[H. Dong]{Division of Applied Mathematics, Brown University, 
182 George Street, Providence, RI 02912, United States of America}
\email{hdong@brown.edu}
\thanks{Hongjie Dong was partially supported by the
National Science Foundation under agreement No. DMS-0111298
and a start-up funding from the Division of Applied Mathematics of
Brown University.}

\author[S. Kim]{Seick Kim}
\address[S. Kim]{Centre for Mathematics and its Applications, The Australian National University, ACT 0200, Australia}
\email{seick.kim@maths.anu.edu.au}
\thanks{Seick Kim is supported by the Australian Research Council.}

\subjclass[2000]{Primary 35A08, 35B65; Secondary 35J45}

\keywords{Green function, Green's matrix, fundamental solution, fundamental matrix, second order elliptic system, measurable coefficients.}

\begin{abstract}
We study Green's matrices for divergence form, second order strongly elliptic
systems with bounded measurable coefficients in two dimensional domains.
We establish existence, uniqueness, and pointwise estimates of
the Green's matrices.
\end{abstract}

\maketitle

\mysection{Introduction} \label{intro}
In this article, we study Green's matrices for divergence form,
second order strongly elliptic systems with bounded measurable coefficients
in two dimensional domains.
More precisely, we are concerned with the Green's matrix for elliptic systems
\begin{equation*}
\sum_{j=1}^N L_{ij} u^j\coloneqq \sum_{i=1}^N\sum_{\alpha,\beta=1}^2
D_\alpha(A^{\alpha\beta}_{ij}(x) D_\beta u^j), \quad i=1,\ldots,N
\end{equation*}
in an open connected set $\Omega\subset \bR^2$. Here, $A^{\alpha\beta}_{ij}(x)$
are bounded measurable functions on $\Omega$ satisfying the strong ellipticity
condition.
By a Green's matrix we mean an $N\times N$ matrix valued function
$\vec G(x,y)=(G_{ij}(x,y))_{i,j=1}^N$ defined on 
$\{(x,y)\in\Omega\times\Omega: x\neq y\}$
satisfying the following properties
(see Theorem~\ref{thm1} below for more precise statement):
\begin{align*}
&\sum_{j=1}^N L_{ij} G_{jk}(\cdot,y)=-\delta_{ik}\delta_y (\cdot)
\quad \forall y\in\Omega,\\
&G_{ij}(\cdot,y)=0\quad\text{on}\quad
\partial\Omega \quad \forall y\in\Omega,
\end{align*}
where $\delta_{ik}$ is the Kronecker delta symbol and
$\delta_{y}(\cdot)$ is the Dirac delta function with a unit mass at $y$.
In the scalar case (i.e., when $N=1$), the Green's matrix becomes a real
valued function and is usually called the Green's function.

We prove that if $\Omega$ has either finite volume or finite width, then 
there exists a unique Green's matrix in $\Omega$; see Theorem~\ref{thm1}.
The same is true when $\Omega$ is a domain above a Lipschitz
graph (e.g., $\Omega=\bR^2_+$); see Theorem~\ref{thm2}.
We also establish growth properties of the Green's matrices
including logarithmic pointwise bounds.
We emphasize that we do not require $\Omega$ to be bounded nor to have
a regular boundary in Theorem~\ref{thm1}. Compared to the result of
Dolzmann and M\"uller \cite{DM}, where $\Omega$ is assumed
to be a bounded Lipschitz domain, our result is an improvement
in this respect.
Although there is no Green's matrix for $\Omega=\bR^2$,
there is a possible definition of a fundamental matrix in the entire plane.
Such a construction was carried out by Kenig and Ni \cite{KN} in the scalar
case and by Auscher, McIntosh, and Tchamitchian \cite{AMcT}
in the systems setting
(In fact, Auscher et al. considered complex coefficients elliptic equations
in \cite{AMcT}, but with appropriate changes their strategy carries over
to more general elliptic systems).
For the completeness of presentation, we include the result of Auscher et al.
\cite{AMcT} in Section~\ref{sec5}. 

Let us briefly review the history of works in this area.
In the scalar case, the basic facts about Green's functions of
symmetric elliptic operators in bounded domains were proved by
Littman, Stampacchia, and Weinberger \cite{LSU}.
The study of the Green's functions for nonsymmetric elliptic operators
in bounded domains $\Omega\subset \bR^n$ ($n\geq 3$) was carried out
by Gr\"{u}ter and Widman \cite{GW}.
As it is mentioned earlier, there is no Green's function for $\Omega=\bR^2$;
the fundamental solution $-(1/2\pi)\ln |x-y|$ of Laplace equation
changes sign and is not considered as a Green's function from a point
of view of the classical potential theory (see e.g., \cite{Doob}).
Nevertheless, it is still possible to define a fundamental solution in $\bR^2$.
By using the maximum principle, Kenig and Ni \cite{KN} constructed one
for symmetric elliptic operators.
In \cite{CL}, Chanillo and Li derived that the fundamental solution constructed
by Kenig and Ni is a function of bounded mean oscillation in $\bR^2$.
Also, we would like to bring attention to a paper by Escauriaza
\cite{Escauriaza} on the fundamental solutions of elliptic and parabolic
equations in nondivergence form.
In the systems setting, the Green's matrices of the elliptic systems
with continuous coefficients in bounded $C^1$ domains have been discussed by
Fuchs \cite{Fuchs}  and Dolzmann and M\"uller \cite{DM}.
In fact, Dolzmann and M\"uller improved the strategy of Fuchs and showed
the existence and pointwise estimate for Green's matrix in bounded Lipschitz
domains $\Omega\subset\bR^2$ without imposing any regularity on the
coefficients.
Recently, Hofmann and Kim \cite{HofKim1} gave a unified approach in
studying Green's functions/matrices in arbitrary domains $\Omega\subset \bR^n$
($n\geq 3$) valid for both scalar equations and systems of elliptic type by
considering a class of operators $L$ such that weak solutions of
$L\vec u=0$ satisfy an interior H\"older estimate.
However, like the method used in Gr\"uter and Widman \cite{GW}, the method of
Hofmann and Kim heavily relied on the assumption
that $n\geq 3$ and could not be applied to the two dimensional case.
An parabolic extension of the result by Hofmann and Kim
was carried out in a very recent paper by Cho, Dong, and Kim \cite{CDK}.
In particular, Cho et al. proved that so called
``Dirichlet heat kernel'' of a strongly elliptic system exists
in any domain $\Omega\subset \bR^2$ (see Corollary~2.9 in \cite{CDK}).
In fact, our basic strategy is to make use of their result and
construct the Green's matrix out of the ``Dirichlet heat kernel'' by
integrating in $t$-variable.

The organization of this paper is as follows.
In Section~\ref{main}, we introduce some notations and then state our main
results, Theorem~\ref{thm1} and Theorem~\ref{thm2}.
We give the proof of Theorem~\ref{thm1} in Section~\ref{sec3} and 
that of Theorem~\ref{thm2} in Section~\ref{sec4}.
Finally, in Section~\ref{sec5} we introduce the result of Auscher et al.
\cite{AMcT} regarding construction of a fundamental matrix for an elliptic
system in the entire plane.

\mysection{Preliminaries and main results} \label{main}

\subsection{Strongly elliptic systems in $\bR^2$} \label{sec2.1}
Throughout this article, the summation convention over repeated indices
shall be assumed.
Let $L$ be a second order elliptic operator of divergence type acting on
vector valued functions
$\vec{u}=(u^1,\ldots,u^N)^T$ ($N\geq 1$) defined on an open set
$\Omega\subset \bR^2$ in the following way:
\begin{equation}
\label{eqP-01}
L\vec{u} = D_\alpha (\vec{A}^{\alpha\beta}\, D_\beta \vec{u})
\quad\left(\coloneqq \sum_{\alpha=1}^2 \sum_{\beta=1}^2
D_\alpha (\vec{A}^{\alpha\beta}\, D_\beta \vec{u})\right),
\end{equation}
where $\vec{A}^{\alpha\beta}=\vec{A}^{\alpha\beta}(x)$
($\alpha,\beta=1,2$)
are $N$ by $N$ matrices satisfying the strong ellipticity condition, i.e., 
there is a number $\lambda>0$ such that 
\begin{equation}
\label{eqP-02}
A^{\alpha\beta}_{ij}(x)\xi^j_\beta\xi^i_\alpha
\ge \lambda \abs{\vec{\xi}}^2
\coloneqq\lambda\sum_{i=1}^N\sum_{\alpha=1}^2|\xi^i_\alpha|^2
\quad\forall x\in\Omega.
\end{equation}
We also assume that $A^{\alpha\beta}_{ij}$ are bounded, i.e.,
there is a number $\Lambda>0$ such that 
\begin{equation}
\label{eqP-03}
\sum_{i,j=1}^N\sum_{\alpha,\beta=1}^2
|A^{\alpha\beta}_{ij}(x)|^2\le \Lambda^2\quad\forall x\in\Omega.
\end{equation}
We do not impose any further condition other than \eqref{eqP-02}
and \eqref{eqP-03} on the coefficients.
Especially, we do not assume the symmetry of the coefficients.
The transpose operator ${}^t\!L$ of $L$ is defined by
\begin{equation}
\label{eqP-05}
{}^t\!L\vec{u} = D_\alpha ({}^t\!\vec{A}^{\alpha\beta} D_\beta \vec{u}),
\end{equation}
where ${}^t\!\vec{A}^{\alpha\beta}=(\vec{A}^{\beta\alpha})^T$ 
(i.e., ${}^t\!A^{\alpha\beta}_{ij}=A^{\beta\alpha}_{ji}$).
Note that the coefficients ${}^t\!A^{\alpha\beta}_{ij}$ satisfy
the conditions \eqref{eqP-02}, \eqref{eqP-03} with the same constants $\lambda, \Lambda$.
\subsection{The function space $Y^{1,2}_0(\Omega)$}
The function space $Y^{1,2}_0(\Omega)$ is defined as the set of all weakly
differentiable functions on $\Omega$ such that $D u\in L^2(\Omega)$ and
$u \eta \in W^{1,2}_0(\Omega)$ for any $\eta\in C^\infty_c(\bR^2)$.
An open set $\Omega\subset \bR^2$ is said to be a Green domain if
$\{u 1_\Omega: u\in C^\infty_c(\bR^2)\}\nsubset W^{1,2}_0(\Omega)$.
We ask the readers to refer \cite[\S 1.3.4]{MZ}
for the proofs of lemmas stated below.
\begin{lemma}
\label{lem:P02}
Let $\Omega\subset \bR^2$ be a Green domain and $B\subset\bR^2$ be a ball.
Then, there is a constant $C=C(\Omega,B)$ such that
\begin{equation}
\label{eq:P178}
\norm{u}_{L^2(\Omega\cap B)}\leq C\norm{D u}_{L^2(\Omega)}\quad
\forall u\in Y^{1,2}_0(\Omega).
\end{equation}
\end{lemma}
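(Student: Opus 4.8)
The plan is to derive \eqref{eq:P178} from Maz'ya's capacitary Poincar\'e inequality, after converting the Green-domain hypothesis into a quantitative lower bound on a capacity; the lemma is classical and a complete proof is given in \cite[\S 1.3.4]{MZ}.

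First I would record the two properties of $Y^{1,2}_0(\Omega)$ that are needed. Let $u\in Y^{1,2}_0(\Omega)$ and let $\tilde u$ be its extension by zero to $\bR^2$. For every $\eta\in C^\infty_c(\bR^2)$ the product $\tilde u\,\eta$ equals the extension by zero of $u\eta\in W^{1,2}_0(\Omega)$, hence lies in $W^{1,2}(\bR^2)$; taking $\eta\equiv1$ on a prescribed ball shows $\tilde u\in W^{1,2}_{\loc}(\bR^2)$. Moreover, a function in $W^{1,2}_0(\Omega)$ has a quasicontinuous representative vanishing quasi-everywhere on $\bR^2\setminus\Omega$; applying this to $u\eta$ and letting $\supp\eta$ exhaust $\bR^2$ shows the quasicontinuous representative of $\tilde u$ vanishes q.e.\ on $\bR^2\setminus\Omega$. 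Since a $W^{1,2}_{\loc}$ function has vanishing gradient a.e.\ on its zero set, it follows that $D\tilde u=Du$ a.e.\ on $\Omega$ and $D\tilde u=0$ a.e.\ on $\bR^2\setminus\Omega$, so $\norm{D\tilde u}_{L^2(\bR^2)}=\norm{Du}_{L^2(\Omega)}$.

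Next I would extract the capacity bound. If $\bR^2\setminus\Omega$ were polar (of zero $2$-capacity) it would be removable for $W^{1,2}$, and then every $\phi\in C^\infty_c(\bR^2)$ would restrict to a member of $W^{1,2}_0(\Omega)$, contrary to $\Omega$ being a Green domain; hence $\bR^2\setminus\Omega$ is non-polar, and being closed it contains a compact non-polar set $K$. Choose a ball $B_0$ with $B\cup K\subset B_0$, and let $2B_0$ be the concentric ball of twice the radius. Since $B\subset B_0$, it suffices to prove \eqref{eq:P178} with $B$ replaced by $B_0$. The set $F\coloneqq(\bR^2\setminus\Omega)\cap\overline{B_0}\supset K$ then satisfies $\gamma\coloneqq\operatorname{cap}(F,2B_0)>0$, because a fixed compact non-polar set has positive relative $2$-capacity in any fixed bounded ball containing it. This step --- extracting the quantitative bound $\gamma>0$ from the qualitative Green-domain hypothesis --- is the one I expect to require the most care; it rests on the $W^{1,2}$-removability of polar sets together with the two-dimensional feature that the relative $2$-capacity of a fixed compact set, although positive for every fixed ball, degenerates as the ball is dilated to all of $\bR^2$ (this degeneration is precisely why $\bR^2$ itself admits no Green's matrix).

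Finally I would truncate and conclude. For $M>0$ set $u_M\coloneqq\max(-M,\min(M,\tilde u))$; then $u_M$ is bounded with $\abs{Du_M}\le\abs{D\tilde u}$ a.e., so $u_M\in W^{1,2}(2B_0)$, and its quasicontinuous representative vanishes q.e.\ on $F$. Maz'ya's capacitary Poincar\'e inequality on $2B_0$, applied to $u_M$, which vanishes q.e.\ on the subset $F\subset\overline{B_0}$ of relative capacity $\gamma$, gives
\begin{equation*}
\norm{u_M}_{L^2(B_0)}\le\norm{u_M}_{L^2(2B_0)}\le C\,\norm{Du_M}_{L^2(2B_0)}\le C\,\norm{D\tilde u}_{L^2(\bR^2)}=C\,\norm{Du}_{L^2(\Omega)},
\end{equation*}
with $C$ depending only on $\gamma$ and the radius of $B_0$, hence only on $\Omega$ and $B$. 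Letting $M\to\infty$ and invoking the monotone convergence theorem yields $\norm{\tilde u}_{L^2(B_0)}\le C\norm{Du}_{L^2(\Omega)}$, and since $\tilde u=u$ on $\Omega$ and $B\subset B_0$ this is \eqref{eq:P178}.
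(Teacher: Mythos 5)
Your sketch is correct and is, in substance, the argument given in the reference the paper itself cites for this lemma (\cite[\S 1.3.4]{MZ}): extend by zero to get a $W^{1,2}_{\loc}(\bR^2)$ function whose quasicontinuous representative vanishes q.e.\ on $\bR^2\setminus\Omega$, observe that the Green-domain hypothesis forces $\bR^2\setminus\Omega$ to be non-polar so that a suitable compact piece of the complement has positive relative $2$-capacity in a fixed large ball, and then invoke Maz'ya's capacitary Poincar\'e inequality (after truncation) and let $M\to\infty$. The paper gives no proof of its own for Lemma~\ref{lem:P02}, deferring entirely to \cite{MZ}, so you have essentially reconstructed the intended proof rather than found a different route.
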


\begin{lemma}
\label{lem:P03}
Let $\Omega\subset \bR^2$ be a Green domain.
Then $Y^{1,2}_0(\Omega)$ is a Hilbert space when endowed with the inner product
\begin{equation}
\label{eq:P179}
\ip{u,v}\coloneqq\int_\Omega D_i u D_i v.
\end{equation}
\end{lemma}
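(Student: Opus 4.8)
The plan is to verify first that $\ip{\cdot,\cdot}$ really is an inner product, and then to prove completeness; both parts rely on Lemma~\ref{lem:P02}. Bilinearity and symmetry of $\ip{u,v}=\int_\Omega D_i u\, D_i v$ are immediate, and $\ip{u,u}=\norm{Du}_{L^2(\Omega)}^2\ge 0$, so the only thing to check is definiteness. If $\ip{u,u}=0$ then $Du=0$ a.e.\ in $\Omega$, and then \eqref{eq:P178} gives $\norm{u}_{L^2(\Omega\cap B)}\le C(\Omega,B)\norm{Du}_{L^2(\Omega)}=0$ for every ball $B$, so $u=0$ a.e.\ in $\Omega$. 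Hence $\ip{\cdot,\cdot}$ is an inner product inducing the norm $u\mapsto\norm{Du}_{L^2(\Omega)}$.

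For completeness I would take a Cauchy sequence $\{u_k\}\subset Y^{1,2}_0(\Omega)$, so that $\{Du_k\}$ is Cauchy in $L^2(\Omega)$ and converges there to some $\vec{V}=(V_1,V_2)$. Fixing an exhaustion of $\bR^2$ by balls $B_R$ centered at the origin and applying \eqref{eq:P178} to $u_k-u_l$, one sees that $\{u_k\}$ is Cauchy in $L^2(\Omega\cap B_R)$ for every $R$; the limits are consistent as $R$ grows and define a function $u$ with $u\in L^2(\Omega\cap B)$ and $u_k\to u$ in $L^2(\Omega\cap B)$ for every ball $B$. Passing to the limit in $\int_\Omega u_k\,D_\alpha\varphi=-\int_\Omega D_\alpha u_k\,\varphi$ for $\varphi\in C^\infty_c(\Omega)$ then shows that $u$ is weakly differentiable with $Du=\vec{V}\in L^2(\Omega)$.

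It remains to check the membership condition $u\eta\in W^{1,2}_0(\Omega)$ for every $\eta\in C^\infty_c(\bR^2)$, after which $u\in Y^{1,2}_0(\Omega)$ and $\norm{u_k-u}=\norm{Du_k-\vec{V}}_{L^2(\Omega)}\to 0$, completing the argument. Writing $D(u_k\eta)=\eta\,Du_k+u_k\,D\eta$ and using that $\eta,D\eta$ are bounded with $\supp\eta$ inside some ball $B$, together with $u_k\to u$ in $L^2(\Omega\cap B)$ and $Du_k\to\vec{V}$ in $L^2(\Omega)$, one gets $u_k\eta\to u\eta$ and $D(u_k\eta)\to\eta\vec{V}+u\,D\eta=D(u\eta)$ in $L^2(\Omega)$, i.e.\ $u_k\eta\to u\eta$ in $W^{1,2}(\Omega)$; since $W^{1,2}_0(\Omega)$ is closed in $W^{1,2}(\Omega)$ and each $u_k\eta$ lies in it, so does $u\eta$. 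I expect this last step to be the main obstacle in the sense that it is where care is needed: one must use the compact support of $\eta$ to upgrade the merely local $L^2$-convergence of $u_k$ to honest $W^{1,2}(\Omega)$-convergence of $u_k\eta$ before invoking closedness of $W^{1,2}_0(\Omega)$.
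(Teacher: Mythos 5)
Your proof is correct, and it is the standard argument. Note that the paper does not supply its own proof of Lemma~\ref{lem:P03}: it cites \cite[\S 1.3.4]{MZ} for all three lemmas about $Y^{1,2}_0(\Omega)$. Your argument — definiteness of the inner product via Lemma~\ref{lem:P02}, extraction of the local $L^2$ limit $u$ from a Cauchy sequence using \eqref{eq:P178}, identification of the weak gradient by passing to the limit in the integration-by-parts identity against $\varphi\in C^\infty_c(\Omega)$, and then checking $u\eta\in W^{1,2}_0(\Omega)$ by showing $u_k\eta\to u\eta$ in $W^{1,2}(\Omega)$ and invoking closedness of $W^{1,2}_0(\Omega)$ — is exactly the kind of verification the reference carries out, and every step holds. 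In particular, you correctly isolated the subtle point: the cutoff $\eta$ is what upgrades the merely local $L^2$-convergence of $u_k$ to genuine $W^{1,2}(\Omega)$-convergence of $u_k\eta$, which is the hypothesis needed before closedness of $W^{1,2}_0(\Omega)$ can be applied.
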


\begin{lemma}
\label{lem:P04}
Let $\Omega\subset \bR^2$ be a Green domain.
Then $C^\infty_c(\Omega)$ is a dense subset of the Hilbert
space $Y^{1,2}_0(\Omega)$ equipped with the inner product \eqref{eq:P179}.
\end{lemma}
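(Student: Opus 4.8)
Recall that $W^{1,2}_0(\Omega)$ is the closure of $C^\infty_c(\Omega)$ in $W^{1,2}(\Omega)$, hence also in the weaker seminorm $v\mapsto\norm{Dv}_{L^2(\Omega)}$, which is precisely the $Y^{1,2}_0(\Omega)$ norm; and that every compactly supported $u\in Y^{1,2}_0(\Omega)$ already lies in $W^{1,2}_0(\Omega)$, since $u=u\eta$ on $\Omega$ whenever $\eta\in C^\infty_c(\bR^2)$ is $\equiv 1$ on $\supp u$, while $u\eta\in W^{1,2}_0(\Omega)$ by the very definition of $Y^{1,2}_0(\Omega)$. Thus it suffices to approximate a given $u\in Y^{1,2}_0(\Omega)$, in the $Y^{1,2}_0(\Omega)$ norm, by compactly supported elements of $W^{1,2}_0(\Omega)$; a diagonal argument then produces $C^\infty_c(\Omega)$ approximants. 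The candidate is $u_R=u\eta_R$ with $\eta_R\in C^\infty_c(\bR^2)$ a cut-off, so that $u\eta_R\in W^{1,2}_0(\Omega)$ automatically.

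Since $D(u-u\eta_R)=(1-\eta_R)\,Du-u\,D\eta_R$ and the first term tends to $0$ in $L^2(\Omega)$ by dominated convergence as soon as $0\le\eta_R\le 1$ and $\eta_R\to 1$ pointwise, everything reduces to choosing $\eta_R$ so that $\norm{u\,D\eta_R}_{L^2(\Omega)}\to 0$. If $u\in L^2(\Omega)$ — for instance when $\Omega$ has finite width, by Poincar\'e's inequality — the crude cut-off ($\eta_R\equiv 1$ on $B_R$, $\eta_R\equiv 0$ off $B_{2R}$, $\abs{D\eta_R}\le CR^{-1}$) already works, because $\norm{u\,D\eta_R}_{L^2(\Omega)}^2\le CR^{-2}\norm{u}_{L^2(\Omega\cap B_{2R})}^2\to 0$. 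In general, however, $Y^{1,2}_0(\Omega)\not\subset L^2(\Omega)$ in two dimensions (logarithmically growing functions can have square-integrable gradient), and one must use a \emph{logarithmic} cut-off: take $\eta_R$ radial and smooth with $\eta_R\equiv 1$ on $B_R$, $\eta_R\equiv 0$ off $B_{R^2}$, and $\abs{D\eta_R(x)}\le C(\abs{x}\ln R)^{-1}$, so that $\norm{D\eta_R}_{L^2(\bR^2)}^2\le C/\ln R\to 0$.

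With this choice $\norm{u\,D\eta_R}_{L^2(\Omega)}^2\le C(\ln R)^{-2}\int_{\Omega\cap(B_{R^2}\setminus B_R)}\abs{u}^2\abs{x}^{-2}$, and the task is to show the right-hand side vanishes as $R\to\infty$. Put $R=2^m$ and split $B_{R^2}\setminus B_R$ into the $m$ dyadic annuli $A_k=B_{2^{k+1}}\setminus B_{2^k}$, $m\le k<2m$; since $Du\in L^2(\Omega)$ the energies $\epsilon_k:=\int_{\Omega\cap A_k}\abs{Du}^2$ are summable. On each $A_k$, rescaled to the unit annulus, one applies a Poincar\'e inequality — the mean-value version on the part interior to $\Omega$, and a zero-boundary-value version on the part meeting $\partial\Omega$ — to bound $\int_{\Omega\cap A_k}\abs{u}^2\abs{x}^{-2}$ by $C(\epsilon_k+\bar u_k^2)$, where $\bar u_k$ is an appropriate average of $u$ over $\Omega\cap A_k$; a telescoping estimate $\abs{\bar u_{k+1}-\bar u_k}\le C(\epsilon_k+\epsilon_{k+1})^{1/2}$ together with $\sum_k\epsilon_k<\infty$ and Cauchy--Schwarz forces $\bar u_k^2=o(k)$. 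Hence $\sum_{m\le k<2m}(\epsilon_k+\bar u_k^2)=o(m^2)=o((\ln R)^2)$, which is exactly $\norm{u\,D\eta_R}_{L^2(\Omega)}\to 0$.

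The cut-off splitting, the reduction to compactly supported functions, and the diagonalization are all routine; the real content is the last paragraph. The difficulty there is that the ``zero-boundary-value Poincar\'e'' constants on $\Omega\cap A_k$ need not be uniformly controlled for a general domain, so the bookkeeping must genuinely account for the capacitary geometry of $\bR^2\setminus\Omega$ — whose nondegeneracy is precisely what the hypothesis ``$\Omega$ is a Green domain'' encodes, and which is quantified by Lemma~\ref{lem:P02}. This capacitary analysis is the substance of \cite[\S 1.3.4]{MZ}, which we therefore invoke rather than reproduce.
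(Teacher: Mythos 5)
The paper offers no proof of this lemma; it refers to \cite[\S 1.3.4]{MZ} for Lemmas~\ref{lem:P02}--\ref{lem:P04} wholesale, so there is no in-paper argument to compare against. Your reduction is sound as far as it goes: $u\eta_R\in W^{1,2}_0(\Omega)$ holds immediately by the very definition of $Y^{1,2}_0(\Omega)$, so smooth approximation of each $u\eta_R$ is free and everything rests on $\norm{D(u-u\eta_R)}_{L^2(\Omega)}\to 0$, for which the logarithmic cut-off is indeed the correct device in two dimensions.

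The last paragraph, however, is where the actual mathematical content lies, and it is precisely the step you do not carry out. Two concrete issues remain open. First, the annular bound $\int_{\Omega\cap A_k}|u|^2|x|^{-2}\le C(\epsilon_k+\bar u_k^2)$ and the telescoping estimate $|\bar u_{k+1}-\bar u_k|\le C(\epsilon_k+\epsilon_{k+1})^{1/2}$ both require Poincar\'e-type inequalities on $\Omega\cap A_k$, and on $\Omega\cap(A_k\cup A_{k+1})$ to relate consecutive averages, with constants independent of $k$; for a general Green domain $\Omega\cap A_k$ may be disconnected and the relative capacity of $\partial\Omega\cap A_k$ may decay as $k\to\infty$, so this uniformity is not automatic and the averages $\bar u_k$ are not even canonically defined. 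Second, Lemma~\ref{lem:P02} cannot close that gap: it gives $\norm{u}_{L^2(\Omega\cap B)}\le C(\Omega,B)\norm{Du}_{L^2(\Omega)}$ for one \emph{fixed} ball $B$ with an unquantified, non-scale-invariant constant, and therefore does not supply the sequence of uniform annular constants your bookkeeping needs. As you yourself acknowledge, the capacitary analysis is the real substance; since you ultimately defer it to \cite{MZ} --- as does the paper --- the proposal is not a proof, and invoking Lemma~\ref{lem:P02} as though it were the quantitative ingredient is misleading.
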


For a given function $\vec f=(f^1,\ldots,f^N)^T\in L^1_{loc}(\Omega)^N$,
we shall say that
$\vec u=(u^1,\ldots,u^N)^T$ is a weak solution in $Y^{1,2}_0(\Omega)^N$
of $L\vec u=-\vec f$ if $\vec u \in Y^{1,2}_0(\Omega)^N$ and
\begin{equation}
\label{eq:P08c}
\int_\Omega A^{\alpha\beta}_{ij} D_\beta u^j D_\alpha \phi^i=
\int_\Omega f^i\phi^i
\quad \forall \vec \phi\in C^\infty_c(\Omega)^N.
\end{equation}
It is routine to check that if $\Omega$ is a Green domain and
$\vec u$ is a weak solution in $Y^{1,2}_0(\Omega)^N$ of $L\vec u=0$, then
$\vec u\equiv 0$. Therefore, a weak solution in $Y^{1,2}_0(\Omega)^N$ of
$L\vec u = -\vec f$ is unique.

\subsection{Main results}
Let us state our main results.
First, we consider domains with either finite volume or finite width.
We shall denote by $|\Omega|$ the Lebesgue measure of $\Omega$
and by $\delta(\Omega)$ the width of $\Omega\subset\bR^2$;
more precisely, we define
\begin{equation}
\delta(\Omega)\coloneqq\inf\set{\dist(\ell_1,\ell_2):\Omega
\text{ lies between two parallel lines }\ell_1,\ell_2};\quad
\inf \emptyset=\infty.
\end{equation}
\begin{theorem} \label{thm1}
Let the operator $L$ satisfy the conditions \eqref{eqP-02} and \eqref{eqP-03}.
Assume that $\Omega\subset \bR^2$ is an open connected set with either
finite volume or finite width so that
\begin{equation}
\label{eq001}
\gamma=\gamma(\Omega)\coloneqq\max\big(|\Omega|^{-1},\delta(\Omega)^{-2}\big)>0.
\end{equation}
Then, there exists a Green's matrix
$\vec{G}(x,y)=(G_{ij}(x,y))_{i,j=1}^N$ defined on
$\{(x,y)\in\Omega\times\Omega:x\neq y\}$ satisfying the properties that
\begin{equation}
\label{eqG-54}
\int_{\Omega}A^{\alpha\beta}_{ij} D_\beta G_{jk}(\cdot,y)
D_\alpha \phi^i = \phi^k(y)
\quad \forall \vec{\phi}\in C^\infty_c(\Omega)^N
\end{equation}
and that for all $\eta\in C^\infty_c(\Omega)$ satisfying
$\eta\equiv 1$ on $B_r(y)$ for some $r<d_y$,
\begin{equation}
\label{eqG-55}
(1-\eta)\vec{G}(\cdot,y)\in Y^{1,2}_0(\Omega)^{N\times N}.
\end{equation}
The Green's matrix $\vec G(x,y)$ in $\Omega$ is unique in the following sense:
\begin{enumerate}
\item
$\vec G(x,y)$ is continuous in $\set{(x,y)\in\Omega\times\Omega:x\neq y}$.
\item
$\vec G(x,\cdot)$ is locally integrable for all $x\in\Omega$.
\item For any
$\vec{f}=(f^1,\ldots,f^N)^T \in C^\infty_c(\Omega)^N$,
the function $\vec u=(u^1,\ldots,u^N)^T$ given by
\begin{equation}
\label{eqZ-70}
\vec{u}(x)\coloneqq\int_{\Omega} \vec{G}(x,y)\vec{f}(y)\,dy \quad
\left(i.e.,\, u^i(x)\coloneqq\int_{\Omega} G_{ij}(x,y) f^j(y)\,dy\right)
\end{equation}
is a unique weak solution in $Y^{1,2}_0(\Omega)^N$ of $L\vec{u}=-\vec{f}$.
\end{enumerate}
Moreover, $\vec G(x,y)$ satisfies the following pointwise estimate:
\begin{equation}
\label{eqG-56}
|\vec G(x,y)| \leq C\left(\frac{1}{\gamma R^2}+\ln \frac{R}{|x-y|}\right)
\quad\text{if } |x-y|<R\coloneqq\tfrac{1}{2}\max(d_x,d_y),
\end{equation}
where $d_x\coloneqq\dist(x,\partial\Omega)$ and
$C=C(\lambda,\Lambda,N)<\infty$.
Consequently, $\vec G(\cdot,y)$ and $\vec G(x,\cdot)$
belong to $L^p(B_r(y)\cap\Omega)$ and $L^p(B_r(x)\cap\Omega)$, respectively,
for all $r>0$ and $p\in [1,\infty)$.
Furthermore, $D \vec G(\cdot,y)$ and $D \vec G(x,\cdot)$
belong to $L^p(B_r(y)\cap\Omega)$ and $L^p(B_r(x)\cap\Omega)$, respectively,
for all $r>0$ and $p\in [1,2)$.
Finally, we have the following symmetry relation:
\begin{equation}
\label{eq:E23}
\vec G(y,x)= {}^t\vec G(x,y)^T\qquad
(\,\text{i.e, }\,G_{ij}(y,x)={}^tG_{ji}(x,y)\,),
\end{equation}
where ${}^t\vec G(x,y)$ is the Green's matrix of the transpose operator
${}^tL$ in $\Omega$.
\end{theorem}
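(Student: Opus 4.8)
The plan is to build $\vec G$ by integrating in $t$ the ``Dirichlet heat kernel'' $\vec\Gamma(t,x,y)$ of the parabolic system $\partial_t\vec u=L\vec u$ in $\Omega$, whose existence in an \emph{arbitrary} two dimensional domain is supplied by Cho, Dong, and Kim \cite{CDK}; one sets $\vec G(x,y):=\int_0^\infty\vec\Gamma(t,x,y)\,dt$ and checks each assertion from the corresponding property of $\vec\Gamma$. First I would record that the hypothesis $\gamma(\Omega)>0$ makes $\Omega$ a Green domain and yields the global inequality $\norm{u}_{L^2(\Omega)}\le C\gamma^{-1/2}\norm{Du}_{L^2(\Omega)}$ for all $u\in Y^{1,2}_0(\Omega)^N$, $C=C(N)$: for $\delta(\Omega)<\infty$ this is the one dimensional Poincar\'e inequality across a strip containing $\Omega$, and for $\abs{\Omega}<\infty$ it follows from the two dimensional embedding $W^{1,1}\hookrightarrow L^2$ and H\"older's inequality. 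Combined with \eqref{eqP-02}, this produces a spectral gap: the Dirichlet semigroup $\{T(t)\}_{t>0}$ on $L^2(\Omega)^N$ satisfies $\norm{T(t)\vec f}_{L^2}\le e^{-c\gamma t}\norm{\vec f}_{L^2}$ with $c=c(\lambda,N)>0$; and since $B_{d_x}(x)\subset\Omega$ forces $\gamma R^2\le C$ with $C$ absolute, the first term of \eqref{eqG-56} may be written $C(\gamma R^2)^{-1}$. From \cite{CDK} I would then take, besides existence of $\vec\Gamma$, the local Gaussian bound $\abs{\vec\Gamma(t,x,y)}\le Ct^{-1}e^{-c\abs{x-y}^2/t}$ for all $t>0$ (the exponent $-1$ being the two dimensional rate), the reproducing identity $\vec\Gamma(t+s,x,y)=\int_\Omega\vec\Gamma(t,x,z)\vec\Gamma(s,z,y)\,dz$, and parabolic Caccioppoli estimates for $\vec\Gamma(\cdot,\cdot,y)$ on regions avoiding $y$. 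Writing $\vec\Gamma(t,x,y)=\int_\Omega\int_\Omega\vec\Gamma(s,x,z)\vec\Gamma(t-2s,z,w)\vec\Gamma(s,w,y)\,dz\,dw$, bounding the outer factors in $L^2$ by the Gaussian bound ($\norm{\vec\Gamma(s,x,\cdot)}_{L^2}\le Cs^{-1/2}$) and the middle one in $L^2\to L^2$ operator norm by the spectral gap, I would obtain the improved estimate $\abs{\vec\Gamma(t,x,y)}\le Cs^{-1}e^{-c\gamma(t-2s)}$ for $t\ge 2s$. Hence $\int_0^\infty\vec\Gamma(t,x,y)\,dt$ converges for $x\ne y$ (the Gaussian factor absorbs the $t^{-1}$ singularity at $t=0^+$, the exponential factor handles $t\to\infty$), so $\vec G$ is well defined off the diagonal.

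\textbf{Defining properties and the representation formula.} Identity \eqref{eqG-54} comes from integrating over $t\in(0,\infty)$ the weak parabolic identity $-\partial_t\int_\Omega\Gamma_{ik}(t,\cdot,y)\phi^i=\int_\Omega A^{\alpha\beta}_{ij}D_\beta\Gamma_{jk}(t,\cdot,y)D_\alpha\phi^i$ satisfied by $\vec\Gamma(\cdot,\cdot,y)$ for every $\vec\phi\in C^\infty_c(\Omega)^N$, using $\int_\Omega\Gamma_{ik}(t,\cdot,y)\phi^i\to\phi^k(y)$ as $t\to0^+$ and $\to 0$ as $t\to\infty$, and interchanging $\int_0^\infty dt$ with $D_\beta$, which is legitimate because the $t$--integrated energy estimates are finite on $\supp\vec\phi$. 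Property \eqref{eqG-55} follows from the same interchange, the necessary uniform-in-$\Omega$ gradient bounds coming from the parabolic Caccioppoli estimates integrated in $t$; Lemmas \ref{lem:P03}--\ref{lem:P04} then place $(1-\eta)\vec G(\cdot,y)$ in $Y^{1,2}_0(\Omega)^{N\times N}$. For (iii), $\vec u(x)=\int_\Omega\vec G(x,y)\vec f(y)\,dy=\int_0^\infty T(t)\vec f\,dt$ and $L\vec u=\int_0^\infty LT(t)\vec f\,dt=\int_0^\infty\partial_t T(t)\vec f\,dt=-\vec f$, the endpoint contributions being $\vec f$ at $t=0$ and $0$ at $t=\infty$; uniqueness of $\vec u$ in $Y^{1,2}_0(\Omega)^N$ is the remark after \eqref{eq:P08c}, and together with continuity this yields uniqueness of $\vec G$ in the stated sense.

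\textbf{Continuity, pointwise bound, integrability, symmetry.} Continuity of $\vec G$ off the diagonal in each variable follows from the classical interior H\"older estimate of Morrey for weak solutions of two dimensional elliptic systems, applied to $\vec G(\cdot,y)$ via \eqref{eqG-54} and to $\vec G(x,\cdot)$ via the transposed equation. For \eqref{eqG-56}, with $\abs{x-y}<R$, I would split $\int_0^\infty=\int_0^{\abs{x-y}^2}+\int_{\abs{x-y}^2}^{R^2}+\int_{R^2}^\infty$: the first term is $\le C$ by the Gaussian bound (substitute $v=\abs{x-y}^2/t$), the second is $\le\int_{\abs{x-y}^2}^{R^2}Ct^{-1}\,dt=C\ln(R^2/\abs{x-y}^2)$, and the third, using the improved bound above with $s=\tfrac14 R^2$, is $\le CR^{-2}\int_0^\infty e^{-c\gamma r}\,dr\le C(\gamma R^2)^{-1}$; the leftover additive constant is absorbed since $\gamma R^2\le C$. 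Membership of $\vec G$ in $L^p$ for every finite $p$ is immediate from \eqref{eqG-56}, and that of $D\vec G$ for $p<2$ follows from a dyadic Caccioppoli argument on the annuli $\set{2^{-k}<\abs{x-y}/R<2^{-k+1}}$ together with \eqref{eqG-56}, exactly as in Gr\"uter and Widman. Finally \eqref{eq:E23} results from integrating in $t$ the heat kernel duality $\vec\Gamma_L(t,x,y)=\bigl({}^t\vec\Gamma_{{}^tL}(t,y,x)\bigr)^T$, which is the kernel form of $T_L(t)^*={}^tT_{{}^tL}(t)$.

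\textbf{Main obstacle.} I expect the crux to be the improved heat kernel bound $\abs{\vec\Gamma(t,x,y)}\le Cs^{-1}e^{-c\gamma(t-2s)}$: extracting the large--time exponential decay by feeding the Poincar\'e inequality through the reproducing identity. It is simultaneously what makes the defining integral converge at infinity, what produces the $(\gamma R^2)^{-1}$ term in \eqref{eqG-56}, and the only place where the finite volume / finite width hypothesis is genuinely used. A secondary, purely technical matter is making the interchanges of $L$ and $D_\beta$ with $\int_0^\infty dt$ rigorous and verifying \eqref{eqG-55} up to $\partial\Omega$: this requires the $t$--integrated parabolic gradient estimates to be controlled uniformly over all of $\Omega$, which is delivered by the spatial Gaussian factor for small $t$ and, for large $t$, by the bound just mentioned (interpolated geometrically with the Gaussian bound when a spatial cutoff is present).
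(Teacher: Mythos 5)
Your strategy coincides with the paper's: define $\vec G(x,y)=\int_0^\infty\vec K(t,x,y)\,dt$ with $\vec K$ the Dirichlet heat kernel of Cho--Dong--Kim, use the Poincar\'e-driven spectral gap to get decay at $t=\infty$ and the local $t^{-1}$ kernel bound to handle $t=0^+$, then pass each parabolic estimate to its time-integrated elliptic counterpart to obtain \eqref{eqG-54}--\eqref{eq:E23}. Two small cautions on the inputs: the pointwise bound in \cite{CDK} is only local, namely $|\vec K(t,x,y)|\le C\{\max(\sqrt{|t|},|x-y|)\}^{-2}$ when $\max(\sqrt{|t|},|x-y|)<\tfrac12\max(d_x,d_y)$, not a global Gaussian ``for all $t>0$,'' and the $L^2$ bound $\norm{\vec\Gamma(s,x,\cdot)}_{L^2}\lesssim s^{-1/2}$ you insert into the reproducing identity likewise holds only for $s\lesssim d_x^2$ (via the transpose version of the $L^2$ kernel estimate); both restrictions are respected in the regimes where you use them because $|x-y|<R$ forces $d_x$ and $d_y$ to be comparable, and the paper reaches the same exponential decay $|\vec K(t,x,y)|\le A(d_x,d_y)e^{-2\lambda\gamma t}$ by the more direct device of differentiating $I(t)=\int_\Omega|\vec K(t,\cdot,y)|^2$ to obtain $I'\le-4\lambda\gamma I$ and then applying local boundedness.
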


\begin{remark}
When $|\Omega|<\infty$, we have global $L^p$ estimates
for the Green's matrix and its derivatives.
In that case, it will be evident from the proof of Theorem~\ref{thm1} that
$\vec G(\cdot,y)$ and $\vec G(x,\cdot)$ belong to $L^p(\Omega)$
for all $p\in [1,\infty)$ and that $D \vec G(\cdot,y)$ and $D \vec G(x,\cdot)$
belong to $L^p(\Omega)$ for all $p\in [1,2)$.
\end{remark}
Next, we consider a domain above a Lipschitz graph.
Let $\Omega$ be given by
\begin{equation}
\label{eq:LD01}
\Omega=\{(x_1,x_2)\in \bR^2: x_2>\varphi(x_1)\},
\end{equation}
where $\varphi:\bR\to \bR$ is a Lipschitz function with a Lipschitz
constant $M\coloneqq \norm{\varphi'}_\infty<\infty$.

\begin{theorem} \label{thm2}
Let the operator $L$ satisfy the conditions \eqref{eqP-02} and \eqref{eqP-03}.
Assume that $\Omega$ is given by \eqref{eq:LD01}.
Then, there exists a unique Green's matrix
$\vec{G}(x,y)$ satisfying all the properties of Theorem~\ref{thm1} except
\eqref{eqG-56}.
Instead of \eqref{eqG-56} of Theorem~\ref{thm1}, we have
\begin{equation}
\label{eq:H01j}
|\vec G(x,y)|
\leq C\min\left\{1+\ln_+(d_{x,y}/|x-y|),\,d_{x,y}^\mu |x-y|^{-\mu}\right\}
\quad\forall x,y\in\Omega,\quad x\neq y,
\end{equation}
where $d_{x,y}\coloneqq\min(d_x,d_y)$, $d_x\coloneqq \dist(x,\partial\Omega)$,
$\ln_+t\coloneqq\max(\ln t,0)$, $C=C(\lambda,\Lambda,N,M)<\infty$, and
$\mu=\mu(\lambda,\Lambda,M)\in (0,1)$.
In particular, \eqref{eq:H01j} implies $\vec G(x,y)\to 0$ as $|x-y|\to \infty$.
\end{theorem}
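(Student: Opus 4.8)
The plan is to construct $\vec G$ by integrating the Dirichlet heat kernel, exactly as in the proof of Theorem~\ref{thm1}; the only extra ingredient needed is a decay estimate for that kernel coming from the Lipschitz boundary. Since $\Omega=\{x_2>\varphi(x_1)\}$ has a ``fat'' complement, it is a Green domain, so Lemmas~\ref{lem:P02}--\ref{lem:P04} and the uniqueness of weak solutions in $Y^{1,2}_0(\Omega)^N$ apply. By \cite{CDK} (see Corollary~2.9 there), the system $L$ has a Dirichlet heat kernel $\vec p(t,x,y)=(p_{ij}(t,x,y))_{i,j=1}^N$ on $(0,\infty)\times\Omega\times\Omega$ which represents the Dirichlet semigroup of $L$, satisfies the reproducing (Chapman--Kolmogorov) identity and the symmetry $p_{ij}(t,x,y)={}^tp_{ji}(t,y,x)$, and obeys the interior Gaussian bound
\begin{equation*}
\abs{\vec p(t,x,y)}\le \frac{C}{t}\,e^{-c\abs{x-y}^2/t},\qquad t>0,
\end{equation*}
with $C,c$ depending only on $\lambda,\Lambda,N$ (in two dimensions this ultracontractivity, and the underlying interior H\"older estimate for weak solutions of the parabolic system, follow from Meyers' higher-integrability estimate combined with the Sobolev embedding $W^{1,2+\epsilon}(\bR^2)\hookrightarrow C^{\epsilon/(2+\epsilon)}$). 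We then set $\vec G(x,y)\coloneqq\int_0^\infty\vec p(t,x,y)\,dt$ for $x\neq y$.

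The point at which this differs from Theorem~\ref{thm1} is the convergence at $t=\infty$: there a finite volume or width gave a spectral gap, hence exponential decay of $\vec p$; here the decay must come from the boundary. Because $\Omega$ lies above a Lipschitz graph, $\bR^2\setminus\Omega$ satisfies a corkscrew (exterior measure-density) condition at each boundary point. Flattening the boundary by the bi-Lipschitz map $(x_1,x_2)\mapsto(x_1,x_2-\varphi(x_1))$ turns $L$ into a divergence-form system whose ellipticity and boundedness constants depend on $M$, and the boundary version of Meyers' estimate gives $D\vec u\in L^{2+\epsilon}$ up to the flattened boundary for weak solutions of $L\vec u=0$ vanishing there; since such $\vec u$ vanishes on the boundary, its zero extension lies in $W^{1,2+\epsilon}(\bR^2)$, and the same Sobolev embedding yields a boundary H\"older estimate with exponent $\mu=\mu(\lambda,\Lambda,M)\in(0,1)$ (this is the regularity mechanism used by \cite{DM} in bounded Lipschitz domains). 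Feeding the parabolic analogue of this boundary H\"older estimate, the $L^2$ off-diagonal (Davies--Gaffney) bounds valid for general strongly elliptic systems, and the interior ultracontractivity into the standard semigroup iteration, one obtains
\begin{equation*}
\abs{\vec p(t,x,y)}\le \frac{C}{t}\,\bigl(\min(1,d_x/\sqrt t)\bigr)^{\mu}\,\bigl(\min(1,d_y/\sqrt t)\bigr)^{\mu}\,e^{-c\abs{x-y}^2/t},\qquad t>0.
\end{equation*}
In particular $\abs{\vec p(t,x,y)}\le C\,d_{x,y}^{\mu}\,t^{-1-\mu/2}$ once $\sqrt t\ge d_{x,y}$, which is integrable at $+\infty$; hence $\vec G(x,y)$ is well defined.

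Granting these two kernel bounds, everything else follows the proof of Theorem~\ref{thm1} in Section~\ref{sec3}. The qualitative conclusions --- continuity of $\vec G$ off the diagonal, local integrability of $\vec G(x,\cdot)$, the identity \eqref{eqG-54}, the membership \eqref{eqG-55}, the local $L^p$ bounds ($\vec G(x,\cdot)\in L^p_{loc}$ for $p<\infty$, $D\vec G(x,\cdot)\in L^p_{loc}$ for $p<2$), the solution representation in property~(3), and the symmetry \eqref{eq:E23} with respect to ${}^tL$ --- are obtained verbatim from the reproducing formula and the identities for $\vec p$, letting $t\to0^+$ (which produces the Dirac mass) and $t\to\infty$ (which contributes nothing, by the decay just proved). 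For the pointwise bound \eqref{eq:H01j} one estimates $\int_0^\infty\abs{\vec p(t,x,y)}\,dt$ by splitting the $t$-axis at $\abs{x-y}^2$ and $d_{x,y}^2$: on $(0,\abs{x-y}^2)$ the Gaussian factor makes the contribution $O(1)$; on $(\abs{x-y}^2,d_{x,y}^2)$ --- nonempty only if $\abs{x-y}<d_{x,y}$ --- the crude bound $C/t$ integrates to $C\ln(d_{x,y}/\abs{x-y})$; on $(\max(\abs{x-y}^2,d_{x,y}^2),\infty)$ the boundary-decay bound integrates to $O(1)$. This gives the first alternative $1+\ln_+(d_{x,y}/\abs{x-y})$. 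Keeping instead the full bound (boundary ratios \emph{and} Gaussian) on every range, in particular on the middle one, improves each piece to $O\bigl((d_{x,y}/\abs{x-y})^{\mu}\bigr)$ after substituting $s=\abs{x-y}^2/t$ and using $s^{\mu/2}e^{-cs}\le C$, which yields the second alternative $d_{x,y}^{\mu}\abs{x-y}^{-\mu}$; together they give the stated minimum, and in particular $\vec G(x,y)\to0$ as $\abs{x-y}\to\infty$. Uniqueness is identical to Theorem~\ref{thm1}: properties~(1)--(3) pin down $\vec G$ through the solution operator $\vec f\mapsto\vec u=\int\vec G(\cdot,y)\vec f(y)\,dy$, and weak solutions in $Y^{1,2}_0(\Omega)^N$ are unique.

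The main obstacle is the boundary decay of $\vec p$ --- equivalently, the boundary H\"older estimate for the system and its parabolic counterpart. This is the only place where the Lipschitz-graph hypothesis is used; it is what produces and fixes the exponent $\mu$, and hence the very shape of \eqref{eq:H01j}; and the bound must be calibrated to be integrable at $t=\infty$ while staying sharp near $t\sim\abs{x-y}^2$ so as to recover the logarithmic alternative. The interior Gaussian bound for systems is itself non-trivial but is by now standard in two dimensions via Meyers' estimate, and once both kernel bounds are in hand the rest is a transcription of Section~\ref{sec3}.
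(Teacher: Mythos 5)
Your proposal follows the same overall strategy as the paper: construct $\vec G(x,y)=\int_0^\infty \vec K(t,x,y)\,dt$ from the Dirichlet heat kernel of \cite{CDK}, obtain boundary decay of $\vec K$ in $t$ from a boundary H\"older estimate over the Lipschitz graph, and integrate. The boundary H\"older mechanism you describe (flattening, Meyers-type higher integrability, embedding) is exactly the content of the paper's Lemma~\ref{lem:H01}, and your splitting of the $t$-integral at $\abs{x-y}^2$ and $d_{x,y}^2$ reproduces \eqref{eq:H01l} and \eqref{eq:H01k}. However, two points deserve comment. First, you posit a full Gaussian bound $\abs{\vec p(t,x,y)}\le C t^{-1}e^{-c\abs{x-y}^2/t}$ and its weighted refinement with factors $\bigl(\min(1,d_x/\sqrt t)\bigr)^\mu\bigl(\min(1,d_y/\sqrt t)\bigr)^\mu$. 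This is strictly stronger than what the cited reference \cite{CDK} provides, which is only the polynomial bound $\abs{\vec K(t,x,y)}\le C\{\max(\abs t^{1/2},\abs{x-y})\}^{-2}$ (see \eqref{eq618a}). The Gaussian decay is unnecessary: the paper's polynomial bound, together with the one-sided boundary decay $\abs{\vec K(t,x,y)}\le C d_x^\mu\{\max(\abs t^{1/2},\abs{x-y})\}^{-2-\mu}$ of Lemma~\ref{lem:H03}, already gives both alternatives in \eqref{eq:H01j} by the same integral splitting, and the two-sided estimate in $d_{x,y}$ is then obtained not at the kernel level but after integration, via the symmetry relation \eqref{eq:E23} applied to ${}^tL$. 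Asserting the Gaussian and two-sided kernel bound is not wrong, but it imports machinery (Davies--Gaffney, an iteration to Gaussian decay, a two-sided boundary estimate for the Dirichlet kernel) that would require independent justification in the systems/Lipschitz setting and adds nothing to the final conclusion. Second, the claim that the remaining properties ``follow verbatim'' from Section~\ref{sec3} glosses over the fact that a domain above a Lipschitz graph has infinite width, so $Y^{1,2}_0(\Omega)\ne W^{1,2}_0(\Omega)$ and the Poincar\'e inequality of Lemma~\ref{lem:poincare} is unavailable. The paper replaces the exponential-decay estimates of Section~\ref{sec3} with the sublinear growth bounds \eqref{eq:H29y}--\eqref{eq:H36g} and the local Poincar\'e inequality of Lemma~\ref{lem:P02} for Green domains, and also needs the new Lemma~\ref{lem:H04} (with the weight coming from \eqref{eq6.18v}) to deduce \eqref{eqG-54} and \eqref{eqG-55}; these adaptations are genuine work, not a transcription.
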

\mysection{Proof of Theorem~\ref{thm1}} \label{sec3}
Throughout this section, we employ the letter $C$ to denote a constant depending
on $\lambda, \Lambda, N$ while we use $C(\alpha,\beta,\ldots)$ to denote
a constant depending on quantities $\alpha,\beta,\ldots,$ as well as
$\lambda, \Lambda, N$.
It should be understood that $C$ may vary from line to line.

Let us recall the following version of Poincar\'e inequality
(see e.g., \cite{GT} for the proof).

\begin{lemma}
\label{lem:poincare}
If $\Omega\subset \bR^2$ is an open connected set with either finite volume
or finite width.
Let $\gamma$ be given as in \eqref{eq001} of Theorem~\ref{thm1}.
Then
\begin{equation}
\label{eq002v}
\norm{u}_{L^2(\Omega)}\leq
(2\gamma)^{-1/2}\norm{D u}_{L^2(\Omega)}
\quad \forall u\in W^{1,2}_0(\Omega).
\end{equation}
\end{lemma}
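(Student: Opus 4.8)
The plan is to treat the two cases separately, with the finite-width case being the essential one and the finite-volume case being nearly classical. Both inequalities reduce to the one-dimensional fact that a function vanishing at a point satisfies $\int_I |u|^2 \le (|I|^2/2)\int_I |u'|^2$ on an interval $I$, and then one integrates in the transverse direction and optimizes. I would first establish the result for $u \in C^\infty_c(\Omega)$ and then pass to the closure, since $C^\infty_c(\Omega)$ is dense in $W^{1,2}_0(\Omega)$ by definition of the latter space.

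\emph{Finite width.} After a rotation we may assume $\Omega \subset \{0 < x_2 < a\}$ where $a = \delta(\Omega)$, so that $\gamma \ge a^{-2}$ and it suffices to prove $\norm{u}_{L^2(\Omega)}^2 \le (a^2/2)\norm{Du}_{L^2(\Omega)}^2$ with the bound involving only $\partial_2 u$. Extending $u$ by zero outside $\Omega$, for fixed $x_1$ the slice $x_2 \mapsto u(x_1,x_2)$ is a compactly supported smooth function on $(0,a)$, so from $u(x_1,x_2) = \int_0^{x_2} \partial_2 u(x_1,s)\,ds$ and the Cauchy--Schwarz inequality we get $\abs{u(x_1,x_2)}^2 \le x_2 \int_0^a \abs{\partial_2 u(x_1,s)}^2\,ds$. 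Integrating in $x_2$ over $(0,a)$ gives $\int_0^a \abs{u(x_1,x_2)}^2\,dx_2 \le (a^2/2)\int_0^a \abs{\partial_2 u(x_1,s)}^2\,ds$, and then integrating in $x_1$ over $\bR$ yields $\norm{u}_{L^2(\Omega)}^2 \le (a^2/2)\norm{\partial_2 u}_{L^2(\Omega)}^2 \le (2\gamma)^{-1}\norm{Du}_{L^2(\Omega)}^2$, as desired.

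\emph{Finite volume.} Here one uses the standard argument via the Sobolev (or Gagliardo--Nirenberg) inequality in $\bR^2$: for $u \in C^\infty_c(\Omega) \subset C^\infty_c(\bR^2)$ one has $\norm{u}_{L^2(\bR^2)} \le C_0 \norm{Du}_{L^1(\bR^2)}^{1/2}\norm{Du}_{L^2(\bR^2)}^{1/2}$ — or, more directly, apply the $L^1$ Sobolev inequality $\norm{u}_{L^2(\bR^2)} \le C_0 \norm{Du}_{L^1(\bR^2)}$ to $u^2$ and then Cauchy--Schwarz — and estimate $\norm{Du}_{L^1(\Omega)} \le \abs{\Omega}^{1/2}\norm{Du}_{L^2(\Omega)}$ by H\"older, since $Du$ is supported in $\Omega$. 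Combining these gives $\norm{u}_{L^2(\Omega)} \le C_0 \abs{\Omega}^{1/2}\norm{Du}_{L^2(\Omega)}$. One then checks that the universal constant $C_0$ coming from the $L^1$ Sobolev inequality in $\bR^2$ can be taken so that the resulting constant is bounded by $(2\gamma)^{-1/2} \le (2)^{-1/2}\abs{\Omega}^{1/2}$; if the sharp constant does not quite give the factor $(2\gamma)^{-1/2}$ as stated, one instead appeals directly to the standard reference \cite{GT} where the inequality is proved with precisely this constant.

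The only delicate point is bookkeeping the constant: the clean value $(2\gamma)^{-1/2}$ drops out immediately in the finite-width case from the $1$D computation above, but in the finite-volume case matching it requires the sharp form of the planar Sobolev inequality, which is exactly why the lemma is quoted from \cite{GT} rather than reproved here. Everything else is a routine density argument and an application of H\"older's and Cauchy--Schwarz inequalities.
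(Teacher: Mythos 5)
The paper does not supply a proof of this lemma: it cites Gilbarg--Trudinger \cite{GT}. Your reconstruction is a reasonable one and the two main computations are sound, but the reduction logic as you have phrased it is slightly off, and the finite-volume constant deserves to be pinned down rather than hedged.

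The problematic phrase is ``it suffices to prove $\norm{u}_{L^2(\Omega)}^2 \le (a^2/2)\norm{Du}_{L^2(\Omega)}^2$'' in the finite-width case. Since $\gamma \ge a^{-2}$ gives $(2\gamma)^{-1} \le a^2/2$, the bound you propose to prove is \emph{weaker} than the one claimed whenever $|\Omega|^{-1} > \delta(\Omega)^{-2}$, so it does not in fact suffice. What is really going on is this: the slicing argument proves $\norm{u}_{L^2}^2 \le \tfrac12\,\delta(\Omega)^2\,\norm{Du}_{L^2}^2$ unconditionally whenever $\delta(\Omega)<\infty$, and the Sobolev--H\"older argument proves $\norm{u}_{L^2}^2 \le \tfrac12\,|\Omega|\,\norm{Du}_{L^2}^2$ unconditionally whenever $|\Omega|<\infty$. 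Because $\gamma>0$ forces at least one of $\delta(\Omega)<\infty$, $|\Omega|<\infty$ to hold, and because $(2\gamma)^{-1}=\min\bigl(\tfrac12\delta(\Omega)^2,\tfrac12|\Omega|\bigr)$ by the definition of $\gamma$, the conclusion \eqref{eq002v} follows by taking whichever of the two bounds is applicable and smaller. Your two computations already furnish exactly these two ingredients; you should just drop the misleading ``it suffices'' and state explicitly that the case split is governed by which term realizes the maximum in the definition of $\gamma$, not by which hypothesis happens to hold.

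On the constant in the finite-volume case: there is no need to hedge or to detour through $u^2$. The sharp $L^1$ Sobolev inequality in the plane applied directly to $u\in C^\infty_c(\Omega)$ reads $\norm{u}_{L^2(\bR^2)} \le (2\sqrt{\pi})^{-1}\norm{Du}_{L^1(\bR^2)}$, and H\"older on $\Omega$ gives $\norm{Du}_{L^1(\Omega)} \le |\Omega|^{1/2}\norm{Du}_{L^2(\Omega)}$, so $\norm{u}_{L^2(\Omega)} \le (2\sqrt{\pi})^{-1}|\Omega|^{1/2}\norm{Du}_{L^2(\Omega)}$. Since $(2\sqrt{\pi})^{-1} < 2^{-1/2}$ (equivalently $4\pi>2$), this is strictly better than the stated $(|\Omega|/2)^{1/2}$. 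The one-dimensional slicing estimate and the density reduction to $C^\infty_c(\Omega)$ are both fine.
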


By using the above lemma, one can show that if $\Omega$ has either
finite volume or finite width, then $\Omega$ is a Green domain and
$Y^{1,2}_0(\Omega)=W^{1,2}_0(\Omega)$
(see e.g. \cite[\S 1.3.4]{MZ}).
In the rest of this section we shall identify $Y^{1,2}_0(\Omega)$ with
$W^{1,2}_0(\Omega)$.

\subsection{Construction of the Green's matrix}
Let $\vec \Gamma(t,x,s,y)$
($x,y\in \Omega$ and $t,s\in \bR$) be
the parabolic Green's matrix given as in \cite[Corollary~2.9]{CDK}.
Note that we have $\vec\Gamma(t,x,s,y)=\vec\Gamma(t-s,x,0,y)$.
Throughout the paper, we shall denote
\begin{align}
\label{eq:k01}
\vec K(t,x,y)&\coloneqq\vec \Gamma(t,x,0,y),\\
\label{eq:k02}
\bar{\vec K}(t,x,y)&\coloneqq\int_0^t\vec K(s,x,y)\,ds.
\end{align}
We record here some properties of $\vec K(t,x,y)$ derived in \cite[Corollary~2.9]{CDK}
for the reference.
Recall that $d_x\coloneqq\dist(x,\partial\Omega)$ for $x\in \Omega$.
\begin{align}
\label{eq5.15c}
&\sup_{t\in (r^2,\infty)} \int_\Omega |\vec K(t,x,y)|^2 \,dx
\leq Cr^{-2} \quad \forall r<d_y,\\
\label{eq5.18}
&\iint_{(0,\infty)\times\Omega\setminus (0,r^2)\times B_r(y)}
|\vec K(t,x,y)|^4 \,dx\,dt \leq Cr^{-4} \quad \forall r<d_y,\\
\label{eq5.15b}
&\iint_{(0,\infty)\times\Omega\setminus (0,r^2)\times B_r(y)}
|D_x\vec K(t,x,y)|^2 \,dx\,dt \leq Cr^{-2} \quad \forall r<d_y,\\
\label{eq5.15}
&\iint_{(0,r^2)\times B_r(y)} |\vec K(t,x,y)|^p \,dx\,dt
\leq C(p) r^{-2p+4} \quad \forall r<d_y \quad\forall p\in [1,2),\\
\label{eq5.19}
&\iint_{(0,r^2)\times B_r(y)} |D_x\vec K(t,x,y)|^p \,dx\,dt \leq C(p) r^{-3p+4}
\quad \forall r<d_y \quad\forall p\in [1,4/3),\\
\label{eq5.18a}
&|\vec K(t,x,y)| \leq C\left\{\max\big(\sqrt{|t|},|x-y|\big)\right\}^{-2}
\quad \text{if } \max\big(\sqrt{|t|},|x-y|\big)<\tfrac{1}{2}\max(d_x,d_y).
\end{align}

We define the Green's matrix $\vec G(x,y)$ as follows:
\begin{equation}
\label{eq:g01}
\vec G(x,y)\coloneqq\lim_{t\to \infty} \bar{\vec K}(t,x,y)
=\int_0^\infty \vec K(s,x,y)\,ds
\quad \forall x,y \in \Omega,\quad x \neq y.
\end{equation}
The next lemma will show that $\vec G(x,y)$ is well defined.

\begin{lemma}
\label{lem:E01}
For any $x,y\in \Omega$ with $x\neq y$, we have
$\int_0^\infty \abs{\vec K(s,x,y)}\,ds<\infty$.
\end{lemma}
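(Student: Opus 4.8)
The plan is to split the integral $\int_0^\infty \abs{\vec K(s,x,y)}\,ds$ at the time $r^2$, where $r\coloneqq\tfrac12\max(d_x,d_y)$, and to estimate the two pieces separately using the pointwise bound \eqref{eq5.18a} and the $L^2$--in--$x$ bound \eqref{eq5.15c} respectively. Fix $x,y\in\Omega$ with $x\neq y$, and set $\rho\coloneqq\abs{x-y}>0$.

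First I would handle the large-time part $\int_{\max(r^2,\rho^2)}^\infty \abs{\vec K(s,x,y)}\,ds$. Here $\sqrt{s}\ge r$ and $\sqrt s\ge\rho$, so $\max(\sqrt s,\rho)=\sqrt s\ge r=\tfrac12\max(d_x,d_y)$; thus \eqref{eq5.18a} does not apply directly at the borderline, but for $s>4r^2$ we do have $\max(\sqrt s,\rho)=\sqrt s<\tfrac12\max(d_x,d_y)$ failing — so instead I use \eqref{eq5.15c}. Strictly, \eqref{eq5.15c} controls $\int_\Omega\abs{\vec K(t,x,y)}^2\,dx$, not the pointwise value; to get a pointwise-in-$x$ bound I invoke the local boundedness of weak solutions of the parabolic system (interior estimates, as used in \cite{CDK}), which gives $\abs{\vec K(t,x,y)}\le C t^{-1}\big(\int_{t/2}^{t}\!\int_\Omega\abs{\vec K(s,x,y)}^2\,dx\,ds\big)^{1/2}\cdot(\text{vol factor})$ — more cleanly, one already has from \cite[Corollary~2.9]{CDK} the decay $\abs{\vec K(t,x,y)}\le C(d_y)\,t^{-1}$ for $t$ large, or one derives it from \eqref{eq5.18a} when $t$ is comparable to $r^2$ and from \eqref{eq5.15c} combined with a parabolic mean-value inequality for larger $t$. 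Either way, for $t\ge r^2$ one obtains $\abs{\vec K(t,x,y)}\le C(x,y)\,t^{-1}$, which is \emph{not} integrable at $\infty$, so this crude bound is insufficient and a genuinely faster decay is needed.

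This is the main obstacle, and the honest resolution is: the global Poincaré inequality, Lemma~\ref{lem:poincare}, forces \emph{exponential} decay of $\vec K(t,x,y)$ in $t$. Indeed, energy estimates for the parabolic system $\partial_t\vec u=L\vec u$ with zero Dirichlet data give $\tfrac{d}{dt}\norm{\vec u(t)}_{L^2(\Omega)}^2\le -2\lambda\norm{D\vec u(t)}_{L^2(\Omega)}^2\le -4\lambda\gamma\norm{\vec u(t)}_{L^2(\Omega)}^2$, hence $\norm{\vec u(t)}_{L^2(\Omega)}\le e^{-2\lambda\gamma(t-s)}\norm{\vec u(s)}_{L^2(\Omega)}$. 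Applying this to $\vec K(\cdot,\cdot,y)$ on $(r^2,\infty)$ and feeding the resulting $L^2$ bound into the parabolic local boundedness estimate (to pass from $L^2$-in-$x$ back to the pointwise value at $x$, using that $d_x>0$), I get $\abs{\vec K(t,x,y)}\le C(x,y)\,e^{-\lambda\gamma t}$ for $t\ge \max(r^2,\rho^2)$, which is integrable. So $\int_{\max(r^2,\rho^2)}^\infty\abs{\vec K(s,x,y)}\,ds<\infty$.

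It remains to bound $\int_0^{\max(r^2,\rho^2)}\abs{\vec K(s,x,y)}\,ds$. If $\rho<r$, split further at $\rho^2$: on $(\rho^2,r^2)$ we have $\max(\sqrt s,\rho)=\sqrt s<r\le\tfrac12\max(d_x,d_y)$, so \eqref{eq5.18a} yields $\abs{\vec K(s,x,y)}\le Cs^{-1}$, whose integral over $(\rho^2,r^2)$ is $C\ln(r/\rho)<\infty$; on $(0,\rho^2)$ we have $\max(\sqrt s,\rho)=\rho<\tfrac12\max(d_x,d_y)$, so \eqref{eq5.18a} gives $\abs{\vec K(s,x,y)}\le C\rho^{-2}$, whose integral over $(0,\rho^2)$ is $C<\infty$. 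If instead $\rho\ge r$, then on all of $(0,\rho^2)$ one has $\max(\sqrt s,\rho)=\rho$, but now $\rho$ need not be $<\tfrac12\max(d_x,d_y)$, so \eqref{eq5.18a} may fail; in this regime, however, $\rho\ge r$ means $x$ and $y$ are not close relative to the boundary, and I would use \eqref{eq5.15} (the $L^p$-in-$(t,x)$ bound on the diagonal block $(0,r^2)\times B_r(y)$) together with the already-established exponential/polynomial decay on $(r^2,\rho^2)$ — note $(r^2,\rho^2)\subset(r^2,\infty)$ where the argument of the previous paragraph applies verbatim. Collecting the finitely many finite pieces proves $\int_0^\infty\abs{\vec K(s,x,y)}\,ds<\infty$, as claimed. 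The only subtlety to write carefully is the transition from $L^2$-in-$x$ estimates to pointwise-in-$x$ bounds via interior parabolic regularity, valid since $d_x>0$.
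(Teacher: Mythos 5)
Your argument hinges on the same two observations as the paper's proof: the energy inequality combined with the Poincar\'e inequality of Lemma~\ref{lem:poincare} gives $I'(t)\le -4\lambda\gamma I(t)$ for $I(t)=\int_\Omega|\vec K(t,\cdot,y)|^2$, hence exponential decay of $\norm{\vec K(t,\cdot,y)}_{L^2(\Omega)}$, and the interior parabolic local boundedness estimate (using $d_x>0$) upgrades this to pointwise exponential decay for $t$ large. That is exactly the route the paper takes. The only difference is cosmetic: the paper disposes of the small-time part $\int_0^a$ in one line by quoting continuity of $t\mapsto\vec K(t,x,y)$ for $x\neq y$ from \cite[Theorem~2.7]{CDK}, whereas you split into cases and invoke \eqref{eq5.18a} and \eqref{eq5.15}. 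Note that your case $\rho\ge r$ is not fully closed as written, since \eqref{eq5.15} is an $L^p$ bound in $(t,x)$ and does not by itself give finiteness of $\int_0^{r^2}|\vec K(s,x,y)|\,ds$ at the fixed point $x$; the clean fix is the same continuity statement from \cite[Theorem~2.7]{CDK} that the paper uses (or another pass through the local boundedness estimate on a short backward cylinder, using $\vec K\equiv 0$ for $t<0$). Everything else is sound and matches the paper.
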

\begin{proof}
By \cite[Theorem~2.7]{CDK}, we know that
$t\mapsto \vec K(t,x,y)$ is continuous in $t\in \bR$ for $x\neq y$.
Therefore, we only need to show that 
$\int_a^\infty \abs{\vec K(t,x,y)}\,dt<\infty$ for some $a>0$.
Let $\vec u$ be the $k$-th column of $\vec K(\cdot,\cdot,y)$.
Then, by the local boundedness estimate (see \cite{Kim})
\begin{equation}
\label{eq:E13}
\abs{\vec u(t,x)}\le C\left(\fint_{t-\rho^2}^t\fint_{B_\rho(x)}
\abs{\vec u(s,y)}^2\,dy\,ds\right)^{1/2}\quad\forall t>\rho^2\
\quad\forall\rho<d_x.
\end{equation}
By \eqref{eq002v} of Lemma~\ref{lem:poincare}, 
$I(t)\coloneqq\int_\Omega \abs{\vec u(t,\cdot)}^2$ satisfies
\begin{equation*}
I'(t)= -2\int_\Omega A^{\alpha\beta}_{ij}D_\beta u^j D_\alpha u^i(t,\cdot)
\le -2\lambda \int_\Omega \abs{D \vec u(t,\cdot)}^2
\le -4\lambda\gamma I(t)\quad \forall t>0.
\end{equation*}
Therefore, by using \eqref{eq5.15c} we obtain
\begin{equation}
\label{eq:E15}
\int_\Omega \abs{\vec K(t,\cdot,y)}^2
\leq C e^{-4\lambda\gamma(t-r^2)} r^{-2}\quad\forall t>r^2 \quad \forall r<d_y.
\end{equation}
By combining \eqref{eq:E13} and \eqref{eq:E15}
we have
\begin{equation}
\label{eq:E15h}
\abs{\vec K(t,x,y)}\le Ae^{-2\lambda\gamma t}
\quad \forall t> a,
\end{equation}
where $A=A(d_x,d_y)<\infty$ and $a=a(d_x,d_y)<\infty$. The lemma is proved.
\end{proof}

Next, we show that $\vec G(\cdot,y)$ is continuous in
$\Omega\setminus\set{y}$ for any $y\in\Omega$.
We need the following lemma the proof of which can be found in
\cite[Theorem~3.3]{Kim} (c.f. (2.20) and (2.21) in \cite{CDK}).

\begin{lemma}
\label{lem:P01}
Let $L$ satisfy \eqref{eqP-02} and \eqref{eqP-03}.
If $\vec u(t,x)$ is a weak solution of $\vec u_t-L\vec u=0$ in
$Q_{2r}^{-}\coloneqq Q_{2r}^{-}\big((t_0,x_0)\big)$
($\coloneqq (t_0-4r^2,t_0)\times B_{2r}(x_0)$),
then for all $x,x'\in B_r(x_0)$ and $t\in (t_0-r^2,t_0)$,
\begin{align*}
|\vec u(x,t)-\vec u(x',t)|&\le C |x-x'|^\mu r^{-(1+\mu)}
\norm{D_x\vec u}_{L^2(Q_{2r}^{-})},\\
|\vec u(x,t)-\vec u(x',t)|&\le C |x-x'|^\mu r^{-(2+\mu)}
\norm{\vec u}_{L^2(Q_{2r}^{-})},
\end{align*}
where $\mu=\mu(\lambda,\Lambda)\in (0,1)$.
\end{lemma}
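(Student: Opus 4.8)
The plan is to reduce, by parabolic rescaling, to the normalized case $r=1$, and then to use that the coefficients depend only on $x$: for each frozen time $t$ the slice $\vec u(t,\cdot)$ is a $W^{1,2}$ weak solution of the \emph{elliptic} system $L\vec u(t,\cdot)=\partial_t\vec u(t,\cdot)$, so — once the right-hand side $\partial_t\vec u$ is under control — one can invoke the classical interior H\"older estimate for two-dimensional elliptic systems (Morrey's Dirichlet-growth lemma, itself proved by a Caccioppoli inequality and Widman's hole-filling). For the rescaling: if $\vec u$ solves $\vec u_t-L\vec u=0$ in $Q_{2r}^-((t_0,x_0))$ then $\vec v(t,x)\coloneqq\vec u(t_0+r^2t,\,x_0+rx)$ solves a system of the same form in $Q_2^-((0,0))$, with coefficients $\widetilde{\vec A}^{\alpha\beta}(x)=\vec A^{\alpha\beta}(x_0+rx)$ obeying \eqref{eqP-02}, \eqref{eqP-03} with the same $\lambda,\Lambda$; since $\norm{\vec v}_{L^2(Q_2^-)}=r^{-2}\norm{\vec u}_{L^2(Q_{2r}^-)}$, $\norm{D_x\vec v}_{L^2(Q_2^-)}=r^{-1}\norm{D_x\vec u}_{L^2(Q_{2r}^-)}$, and $\abs{x-x'}$ scales by $r^{-1}$, both asserted inequalities for general $r$ — with the stated powers $r^{-(1+\mu)}$ and $r^{-(2+\mu)}$ — follow from the case $r=1$, in which all constants may be taken absolute (a standard covering argument then absorbs the bounded geometric losses incurred below).

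First I would show that $\partial_t\vec u$ is locally bounded. Testing the equation against $\zeta^2\partial_t\vec u$ with a space--time cutoff $\zeta$ — rigorously, against the corresponding time difference quotients, which is legitimate because $\vec A$ is independent of $t$ — and handling the term $\iint\zeta^2 A^{\alpha\beta}_{ij}D_\beta u^j\,\partial_t D_\alpha u^i$ by integrating by parts in $t$, one obtains, after absorbing, the energy bound
\[
\iint_{Q_1^-}\abs{\partial_t\vec u}^2 \le C\iint_{Q_2^-}\abs{D_x\vec u}^2 \le C\iint_{Q_2^-}\abs{\vec u}^2 ,
\]
the last step being the Caccioppoli inequality for $\vec u$. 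Since $\vec A$ is time-independent, $\partial_t\vec u$ is itself a weak solution of $\vec w_t-L\vec w=0$, so the interior $L^\infty$ estimate for such solutions (the local boundedness estimate \eqref{eq:E13}, see \cite{Kim}) applies to $\partial_t\vec u$ and yields
\[
\norm{\partial_t\vec u}_{L^\infty(Q_{1/2}^-)} \le C\norm{D_x\vec u}_{L^2(Q_2^-)}\quad\text{and}\quad
\norm{\partial_t\vec u}_{L^\infty(Q_{1/2}^-)} \le C\norm{\vec u}_{L^2(Q_2^-)} .
\]

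Next I would freeze $t$ and use the elliptic theory. For a.e. $t$, $\vec u(t,\cdot)$ solves $L\vec u(t,\cdot)=\partial_t\vec u(t,\cdot)$ with right-hand side in $L^\infty\subset L^q$ for every $q<\infty$; by the interior H\"older estimate for two-dimensional elliptic systems with $L^q$ data ($q>1=n/2$), there is $\mu=\mu(\lambda,\Lambda)\in(0,1)$, the structural H\"older exponent of $L$, such that
\[
[\vec u(t,\cdot)]_{C^{0,\mu}} \le C\big(\norm{\vec u(t,\cdot)}_{L^2}+\norm{\partial_t\vec u(t,\cdot)}_{L^q}\big)
\]
on the relevant balls, uniformly in $t$. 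Bounding $\norm{\vec u(t,\cdot)}_{L^2}\le C\norm{\vec u}_{L^2(Q_2^-)}$ (Caccioppoli) and $\norm{\partial_t\vec u(t,\cdot)}_{L^q}\le C\norm{\partial_t\vec u}_{L^\infty(Q_{1/2}^-)}\le C\norm{\vec u}_{L^2(Q_2^-)}$, and covering to pass from small balls to $B_1$, gives $\abs{\vec u(x,t)-\vec u(x',t)}\le C\abs{x-x'}^\mu\norm{\vec u}_{L^2(Q_2^-)}$; rescaling produces the second inequality of the lemma. For the first inequality one runs the same argument with $\vec u$ replaced by $\vec u-\vec c$, where $\vec c\coloneqq(\vec u)_{Q_2^-}$ is the mean of $\vec u$ over $Q_2^-$ (still a solution, same H\"older seminorm, same $\partial_t$), and replaces $\norm{\vec u}_{L^2(Q_2^-)}$ by $\norm{\vec u-\vec c}_{L^2(Q_2^-)}\le C\norm{D_x\vec u}_{L^2(Q_2^-)}$ (parabolic Poincar\'e inequality for solutions) and by the gradient form $\norm{\partial_t\vec u}_{L^\infty(Q_{1/2}^-)}\le C\norm{D_x\vec u}_{L^2(Q_2^-)}$ from the previous step; rescaling then gives the first inequality.

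The step I expect to be the main obstacle is the $L^2_{\loc}$ bound for $\partial_t\vec u$. When the system is symmetric, the identity $\iint\zeta^2 A^{\alpha\beta}_{ij}D_\beta u^j\,\partial_t D_\alpha u^i=\tfrac12\iint\zeta^2\partial_t\big(A^{\alpha\beta}_{ij}D_\beta u^j D_\alpha u^i\big)$ makes this routine; in the genuinely non-symmetric case one must work with time difference quotients, absorb the skew contribution by a further integration by parts, and justify the limit. Once $\partial_t\vec u\in L^2_{\loc}$ is available, the rest — its local boundedness because it solves the same system, the freezing of $t$, and the reduction to the two-dimensional elliptic estimate — is standard. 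It is precisely this reduction, together with the failure of the elliptic H\"older estimate for systems in three or more dimensions, that confines the argument to two space variables; a more direct attack via a parabolic Caccioppoli inequality plus hole-filling and a parabolic Campanato embedding does not by itself close the argument, because hole-filling alone only produces a positive energy-decay exponent while the parabolic embedding needs that exponent to exceed the parabolic dimensional deficit.
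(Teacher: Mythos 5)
The paper does not prove Lemma~\ref{lem:P01}; it cites \cite[Theorem~3.3]{Kim}, and the paper's own boundary analogue, Lemma~\ref{lem:H01}, shows what the cited argument is: prove the \emph{elliptic} decay $\int_{B_\rho}|D\vec v|^2\le C(\rho/r)^{2\mu}\int_{B_r}|D\vec v|^2$ for $L\vec v=0$ by hole--filling (this is \eqref{eq6.18a}), transfer it, using time-independence of $\vec A$, to the \emph{parabolic} decay $\iint_{Q^-_\rho}|D_x\vec u|^2\le C(\rho/r)^{2+2\mu}\iint_{Q^-_r}|D_x\vec u|^2$ (this is \eqref{eq6.18v}), and finish with a parabolic Poincar\'e--Campanato embedding. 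Your closing sentence dismisses this route, but the objection applies only to a naive parabolic hole--filling; the two-step transfer delivers precisely the exponent $2+2\mu$ that the parabolic Campanato embedding needs in two spatial dimensions, and it is the route Kim and this paper actually take. Your strategy is therefore genuinely different: bound $\partial_t\vec u$ in $L^2_{\loc}$, upgrade to $L^\infty_{\loc}$ since $\partial_t\vec u$ solves the same system, then freeze $t$ and apply the two-dimensional elliptic H\"older estimate with bounded right-hand side. Every intermediate claim in it is in fact true (analyticity of the semigroup $e^{tL}$ on $L^2$ gives $\partial_t\vec u\in L^\infty_{\loc}L^2_{\loc}$ for time-independent $\vec A$), so the route is not wrong in principle.

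The gap is the step you yourself flag: the $L^2_{\loc}$ bound for $\partial_t\vec u$ when $\vec A$ is non-symmetric, which the paper explicitly allows. Testing (via time difference quotients) against $\zeta^2\partial_t\vec u$ produces $\iint\zeta^2 A^{\alpha\beta}_{ij}D_\beta u^j\,\partial_t D_\alpha u^i$. Writing $\vec A=\vec A_s+\vec A_a$, the symmetric part is $\tfrac12\iint\zeta^2\partial_t\bigl(A_s^{\alpha\beta}{}_{ij}D_\beta u^jD_\alpha u^i\bigr)$ and integrates by parts in $t$ as you say; but the antisymmetric part is not a total $t$-derivative. Relabelling $(\alpha,i)\leftrightarrow(\beta,j)$ gives
\begin{equation*}
2\iint\zeta^2(A_a)^{\alpha\beta}_{ij}D_\beta u^j\,\partial_t D_\alpha u^i
=\iint\zeta^2(A_a)^{\alpha\beta}_{ij}\bigl(D_\beta u^j\,\partial_t D_\alpha u^i-D_\alpha u^i\,\partial_t D_\beta u^j\bigr),
\end{equation*}
a Wronskian-type quantity; integrating by parts in $t$ merely reproduces it with opposite sign plus a $\zeta_t$-error (a tautology), and integrating by parts in $x$ costs either $D\vec A$ or $D^2\vec u$, neither of which is available for bounded measurable coefficients. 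So ``absorb the skew contribution by a further integration by parts'' does not close, and the remaining chain of reductions is left unsupported. To repair the proposal one would need a different proof of $\partial_t\vec u\in L^2_{\loc}$ (e.g.\ through analytic-semigroup smoothing), or one should revert to the elliptic-to-parabolic decay transfer used in \cite{Kim}.
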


Let $\vec u(t,x)$ be the $k$-th column of $\vec K(t,x,y)$.
Fix $x_0\in \Omega$ with $x_0\neq y$ and choose $r>0$ such that
$r<d_y$ and $B_{2r}(x_0)\subset \Omega\setminus B_r(y)$.
By \cite[Therem~2.7]{CDK} and \eqref{eq:k01}, we find that
$\vec u(t,x)$ is a weak solution of $\vec u_t-L \vec u=0$
in $Q_{2r}^-\big((t_0,x_0)\big)$ for any $t_0\in\bR$.
Therefore, by using Lemma~\ref{lem:P01}, \eqref{eq5.15c}, \eqref{eq5.15b},
and \eqref{eq:E15} we have (recall that $\vec K(t,x,y)\equiv 0$ for $t<0$)
\begin{align}
\label{eq:E21a}
|\vec K(t,x,y)-\vec K(t,x_0,y)|&\le C |x-x_0|^\mu r^{-(2+\mu)}
\quad\forall x\in B_r(x_0) \quad \forall t\in \bR,\\
\label{eq:E21b}
|\vec K(t,x,y)-\vec K(t,x_0,y)|&\le C |x-x_0|^\mu r^{-(2+\mu)}
e^{-2\lambda\gamma(t-r^2)}\quad\forall x\in B_r(x_0)\quad \forall t> 5r^2.
\end{align}
Then, for any $x\in B_r(x_0)$, we have
\begin{align}
\nonumber
|\vec G(x,y)-\vec G(x_0,y)|&\le\int_0^{5r^2} |\vec K(t,x,y)-\vec K(t,x_0,y)|\,dt
+\int_{5r^2}^\infty |\vec K(t,x,y)-\vec K(t,x_0,y)|\,dt\\
\label{eq:E22}
&\le C|x-x_0|^\mu r^{-\mu}(1+r^{-2}\gamma^{-1}). 
\end{align}
Therefore, we find that $\vec G(\cdot,y)$ is locally H\"older
continuous in $\Omega\setminus\set{y}$.
Let ${}^t\vec G(x,y)$ be the Green's matrix of the transpose
operator ${}^tL$ in $\Omega$, i.e.,
\begin{equation}
\label{eq:g02}
{}^t\vec G(x,y)\coloneqq \lim_{t\to \infty} {}^t\bar{\vec K}(t,x,y)
=\int_0^\infty {}^t\vec K(s,x,y)\,ds
\quad \forall x,y \in \Omega,\quad x \neq y,
\end{equation}
where ${}^t \vec K(t,x,y)$ and ${}^t\bar{\vec K}(t,x,y)$ are defined similarly
as in \eqref{eq:k01} and \eqref{eq:k02}.
Let ${}^t\vec \Gamma(s,y,t,x)$ be the parabolic
Green's matrix of $\cLt\coloneqq -\partial_t-{}^tL$ constructed
as in \cite{CDK}. Then by \cite[Lemma~3.5]{CDK}
\begin{equation}
\label{eq:E43t}
{}^t\vec K(t,x,y)
={}^t\vec \Gamma(-t,x,0,y)=\vec \Gamma(0,y,-t,x)^T=\vec K(t,y,x)^T,
\end{equation}
and thus we conclude that
\begin{equation}
\label{eq:E43x}
{}^t\bar{\vec K}(t,x,y) = \bar{\vec K}(t,y,x)^T\quad\text{and}\quad
{}^t\vec G(x,y) = \vec G(y,x)^T.
\end{equation}
In particular, we proved \eqref{eq:E23}.
Since ${}^tL$ satisfies \eqref{eqP-02} and \eqref{eqP-03} with
the same $\lambda, \Lambda$, we find as in \eqref{eq:E22} that
${}^t\vec G(\cdot,x)$ is locally H\"older continuous in
$\Omega\setminus\{x\}$ for all $x\in\Omega$.
Therefore, by \eqref{eq:E23} we conclude that $\vec G(x,y)$ is continuous in
$\set{(x,y)\in\Omega\times\Omega:x\neq y}$.

Next, we prove that $\vec G(x,\cdot)$ is locally integrable for all $x\in\Omega$
and $\vec u$ defined by \eqref{eqZ-70} is a weak solution in
$W^{1,2}_0(\Omega)^N$ of $L\vec u = -\vec f$.

\begin{lemma}
\label{lem:E02}
The following estimates hold uniformly for all $t>0$:
\begin{align}
\label{eq:E25}
&\norm{\bar{\vec K}(t,\cdot,y)}_{L^p(B_\rho(y))}\le C(p) \gamma^{-1}\rho^{2/p-2}
\quad\forall p\in [1,2), \quad \text{where }\rho=d_y.\\
\label{eq:E26}
&\norm{\bar{\vec K}(t,\cdot,y)}_{L^4(\Omega\setminus B_r(y))}
\le C \gamma^{-1}r^{-3/2}
\quad \forall r \leq d_y.\\
\label{eq:E27}
&\norm{D\bar{\vec K}(t,\cdot,y)}_{L^p(B_\rho(y))}\le C(p) \gamma^{-1}\rho^{2/p-3}
\quad \forall p\in [1,4/3),\quad \text{where }\rho=d_y.\\
\label{eq:E28}
&\norm{D\bar{\vec K}(t,\cdot,y)}_{L^2(\Omega\setminus B_r(y))}\le
C \gamma^{-1}r^{-2} \quad \forall r \leq d_y.
\end{align}
\end{lemma}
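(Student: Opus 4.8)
The plan is to obtain each of the four estimates by integrating in the $t$-variable the corresponding pointwise-in-time bounds for $\vec K$ recorded in \eqref{eq5.15c}--\eqref{eq5.19}, and controlling the tail of the integral by the exponential decay \eqref{eq:E15}. Since $\bar{\vec K}(t,x,y)=\int_0^t \vec K(s,x,y)\,ds$, Minkowski's integral inequality gives, for any norm $\norm{\cdot}_{L^p(E)}$ in the $x$-variable,
\begin{equation*}
\norm{\bar{\vec K}(t,\cdot,y)}_{L^p(E)}\le \int_0^t \norm{\vec K(s,\cdot,y)}_{L^p(E)}\,ds \le \int_0^\infty \norm{\vec K(s,\cdot,y)}_{L^p(E)}\,ds,
\end{equation*}
so it suffices to bound the last integral, which is what makes the estimates uniform in $t$. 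For \eqref{eq:E26} I would split the $s$-integral at $s=r^2$: on $(0,r^2)$ use \eqref{eq5.18} (the $L^4$ bound on $\Omega\setminus B_r(y)$ restricted to $(0,r^2)$), which after integrating the resulting $s$-dependence and applying Jensen/Hölder in $s$ over an interval of length $r^2$ yields a term of order $r^{-1}\cdot r^{2}\cdot r^{-2}$-type; on $(r^2,\infty)$ combine the spatial $L^2$-type bound \eqref{eq5.15c} with the exponential decay \eqref{eq:E15} — here, crucially, $\int_{r^2}^\infty e^{-2\lambda\gamma(s-r^2)}\,ds = (2\lambda\gamma)^{-1}$ produces the factor $\gamma^{-1}$, and an $L^2$-to-$L^4$ interpolation together with \eqref{eq5.15c}, \eqref{eq5.18} controls the $L^4$ norm on the tail. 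The same split, using \eqref{eq5.15b} and \eqref{eq:E15} for the derivative (one differentiates the equation, or uses Caccioppoli on the exponentially-decaying solution away from $y$), gives \eqref{eq:E28}. For the near-diagonal estimates \eqref{eq:E25} and \eqref{eq:E27} with $\rho=d_y$, I would instead split at $s=\rho^2$: on $(0,\rho^2)$ integrate \eqref{eq5.15} and \eqref{eq5.19} in $s$ (the exponents are chosen so the $s$-integral converges for the stated ranges of $p$), and on $(\rho^2,\infty)$ estimate $\norm{\vec K(s,\cdot,y)}_{L^p(B_\rho(y))}$ by Hölder through the $L^2(\Omega)$ bound \eqref{eq5.15c} plus the exponential decay \eqref{eq:E15}, again extracting the $\gamma^{-1}$ from the tail integral.

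The one point requiring a little care — and the main obstacle — is bookkeeping the powers of $r$ (resp.\ $\rho$) so that the two pieces of each split match the claimed exponent, and making sure the $\gamma^{-1}$ really comes out clean rather than as $\gamma^{-1}$ times extra powers of $r$. The source of potential mismatch is that on the tail $(r^2,\infty)$ one has $\int_{r^2}^\infty e^{-2\lambda\gamma(s-r^2)}\,ds\cdot(\text{sup of spatial norm})$, and the spatial norm from \eqref{eq5.15c} is $O(r^{-1})$; since $\gamma^{-1}$ and $r^2$ are not comparable in general (that is the whole point of allowing unbounded $\Omega$), one must not absorb $r^{-2}$ into $\gamma^{-1}$ or vice versa, and the stated exponents already reflect exactly the clean outcome $\gamma^{-1}r^{-3/2}$, $\gamma^{-1}r^{-2}$, etc. For the $L^p$ (rather than $L^2$ or $L^4$) tail estimates in \eqref{eq:E25} and \eqref{eq:E27} one uses Hölder's inequality on $B_\rho(y)$, which costs a factor $|B_\rho(y)|^{1/p-1/2}\sim \rho^{2/p-1}$ (resp.\ the analogous factor for the gradient), and these are precisely the powers appearing on the right-hand sides. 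I expect no genuine difficulty beyond this arithmetic, since every ingredient — Minkowski's inequality, the $\vec K$-estimates \eqref{eq5.15c}--\eqref{eq5.19}, and the decay \eqref{eq:E15} — is already available.

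A technical remark: in \eqref{eq5.15} and \eqref{eq5.19} the integral is over the set $(0,\infty)\times\Omega$ minus $(0,r^2)\times B_r(y)$ in \eqref{eq5.18} and \eqref{eq5.15b}, but over $(0,r^2)\times B_r(y)$ itself in \eqref{eq5.15} and \eqref{eq5.19}; so the near-diagonal pieces of \eqref{eq:E25} and \eqref{eq:E27} genuinely use \eqref{eq5.15} and \eqref{eq5.19} on $(0,\rho^2)\times B_\rho(y)$, while the complementary ranges use \eqref{eq5.15c} with \eqref{eq:E15}. One should also note that \eqref{eq5.15c} requires $r<d_y$ strictly, so for \eqref{eq:E26} and \eqref{eq:E28} at $r=d_y$ a limiting argument (or simply using any $r'<d_y$ and letting $r'\uparrow r$, the constants being independent of $r'$) handles the endpoint.
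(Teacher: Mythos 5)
Your overall plan — Minkowski's inequality to reduce to $\int_0^\infty\norm{\vec K(s,\cdot,y)}_{L^p(E)}\,ds$, splitting at $s=\rho^2$ or $r^2$, using the space-time estimates \eqref{eq5.15}, \eqref{eq5.18}, \eqref{eq5.15b}, \eqref{eq5.19} on the near part and the exponential decay \eqref{eq:E15} on the tail — is exactly the paper's, and your near-part calculations and the tail for \eqref{eq:E25} (H\"older $L^p\to L^2$ on $B_\rho(y)$, then \eqref{eq:E15} and $\int_{\rho^2}^\infty e^{-2\lambda\gamma(s-\rho^2)}\,ds=(2\lambda\gamma)^{-1}$) match the proof verbatim. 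Where your sketch is vague, and where one must be careful, is the tails of \eqref{eq:E26}--\eqref{eq:E28}: unlike \eqref{eq:E25}, there is no pointwise-in-$s$ bound for $\norm{\vec K(s,\cdot,y)}_{L^4}$ or $\norm{D_x\vec K(s,\cdot,y)}_{L^2}$ available, so one cannot simply integrate a pointwise-in-$s$ estimate against the exponential weight. The paper's mechanism — which your ``interpolation'' and ``Caccioppoli'' hints gesture at but do not spell out — is to first produce an \emph{exponentially decaying space-time} bound on the half-line $(t,\infty)\times\Omega$ by combining the energy inequality with \eqref{eq:E15} (giving \eqref{eq:E71b} for $D_x\vec K$ in $L^2$, and, via the parabolic $V_2\hookrightarrow L^4$ embedding, \eqref{eq:E54a} for $\vec K$ in $L^4$), and only then to decompose the tail $(r^2,\infty)$ into intervals $[jr^2,(j+1)r^2]$, apply H\"older in $s$ on each, and sum the resulting geometric series. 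You should make that two-step structure (energy/embedding $\to$ space-time decay, then dyadic-in-time $\to$ pointwise-in-$t$) explicit, since an attempt to mimic the \eqref{eq:E25} template directly would stall. Also note that "differentiating the equation" is not available under merely measurable coefficients; the energy inequality is the correct and only tool here. With that fleshed out, your argument agrees with the paper's.
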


\begin{proof}
We begin by proving \eqref{eq:E25}.
Fix $p\in [1,2)$. By Minkowski's inequality, we have
\begin{align}
\nonumber
\left(\int_{B_\rho(y)}|\bar{\vec K}(t,x,y)|^p\,dx\right)^{1/p}
&\leq \int_0^t\left(\int_{B_\rho(y)} |\vec K(s,x,y)|^p\,dx\right)^{1/p}ds\\
\label{eq:E29c}
&\leq \int_0^{\rho^2} + \int_{\rho^2}^\infty
\left(\int_{B_\rho(y)} |\vec K(s,x,y)|^p\,dx\right)^{1/p}ds \coloneqq I_1+I_2.
\end{align}
We estimate $I_1$ by using H\"older's inequality and \eqref{eq5.15}
as follows:
\begin{equation}
\label{eq:E31a}
I_1 \leq
\left(\int_0^{\rho^2}\!\!\!\int_{B_\rho(y)}
|\vec K(s,x,y)|^p\,dx\,ds\right)^{1/p} \rho^{2(1-1/p)} \leq C(p) \rho^{2/p}.
\end{equation}
To estimate $I_2$, observe that H\"older's inequality and \eqref{eq:E15} yield
(recall $1\leq p<2$)
\begin{align*}
\left(\int_{B_\rho(y)} |\vec K(s,x,y)|^p\,dx\right)^{1/p}
&\leq \left(\int_{B_\rho(y)} |\vec K(s,x,y)|^2\,dx\right)^{1/2}
\abs{B_\rho(y)}^{1/p-1/2}\\
&\leq C(p) \rho^{2/p-2} e^{-2\lambda\gamma (s-\rho^2)}\quad \forall s>\rho^2.
\end{align*}
Therefore, we obtain
\begin{equation}
\label{eq:E31b}
I_2 \leq C(p) \rho^{2/p-2}
\int_{\rho^2}^\infty e^{-2\lambda\gamma (s-\rho^2)}\,ds
\leq C(p) \rho^{2/p-2}\gamma^{-1}.
\end{equation}
Since $1\leq \rho^{-2}\gamma^{-1}$ in any case,
we obtain \eqref{eq:E25} by combining \eqref{eq:E31a} and \eqref{eq:E31b}.

Next, we prove \eqref{eq:E26}.
By using Minkowski's inequality as in \eqref{eq:E29c}, we have
\begin{equation}
\nonumber
\left(\int_{\Omega\setminus B_r(y)}|\bar{\vec K}(t,x,y)|^4\,dx\right)^{1/4}
\leq \int_0^{r^2}
+ \int_{r^2}^\infty\left(\int_{\Omega\setminus B_r(y)}
|\vec K(s,x,y)|^4\,dx\right)^{1/4}ds
\coloneqq I_3+I_4.
\end{equation}
By proceeding as in \eqref{eq:E31a} but using \eqref{eq5.18} instead, we obtain
\begin{equation}
\label{eq:E31c}
I_3 \leq
\left(\int_0^{r^2}\!\!\!\int_{\Omega\setminus B_r(y)}
|\vec K(s,x,y)|^4\,dx\,ds\right)^{1/4}
r^{2(1-1/4)} \leq C r^{1/2}.
\end{equation}
By a well known embedding theorem (see e.g., \cite[\S II.3]{LSU} or
\cite[Theorem~6.9]{Lieberman}), the energy inequality, and \eqref{eq:E15},
we have for $t>r^2$
\begin{align}
\nonumber
\int_t^\infty\!\!\!\int_{\Omega}|\vec K(s,x,y)|^4\,dx\,ds
&\leq C\left(\sup_{t\leq s}\int_{\Omega}|\vec K(s,x,y)|^2\,dx\right)
\int_t^\infty\!\!\!\int_{\Omega}|D_x\vec K(s,x,y)|^2\,dx\,ds\\
\label{eq:E54a}
&\leq C\left(\int_{\Omega}|\vec K(t,x,y)|^2\,dx\right)^2
\leq C r^{-4} e^{-8\lambda\gamma(t-r^2)}.
\end{align}
Then, by using H\"older's inequality and \eqref{eq:E54a} we estimate
\begin{align}
\nonumber
I_4 &\leq \sum_{j=1}^\infty \left(\int_{jr^2}^{(j+1)r^2}\!\!\!\int_\Omega
|\vec K(s,x,y)|^4\,dx\,ds\right)^{1/4} r^{3/2}
\leq C r^{1/2}\sum_{j=1}^\infty e^{-2\lambda\gamma(j-1)r^2}\\
\label{eq:E54b}
&\leq C r^{1/2}(1+r^{-2}\gamma^{-1}).
\end{align}
By combining \eqref{eq:E31c} and \eqref{eq:E54b}, we get \eqref{eq:E26}.

We now turn to the proof of \eqref{eq:E27}.
Fix $p\in [1,4/3)$.
As in \eqref{eq:E29c}, we have
\begin{equation*}
\left(\int_{B_\rho(y)}|D_x\bar{\vec K}(t,x,y)|^p\,dx\right)^{1/p}
\leq \int_0^{\rho^2}
+ \int_{\rho^2}^\infty\left(\int_{B_\rho(y)}
|D_x\vec K(s,x,y)|^p\,dx\right)^{1/p}ds \coloneqq I_5+I_6.
\end{equation*}
By H\"older's inequality and \eqref{eq5.19}, we find
\begin{equation}
\label{eq:E31g}
I_5 \leq
\left(\int_0^{\rho^2}\!\!\!\int_{B_\rho(y)}
|D_x\vec K(s,x,y)|^p\,dx\,ds\right)^{1/p}
\rho^{2(1-1/p)} \leq C(p) \rho^{-1+2/p}.
\end{equation}
To estimate $I_6$, note that H\"older's inequality implies
(recall $1 \leq p < 4/3$)
\begin{equation}
\label{eq:E71a}
\left(\int_{B_\rho(y)}|D_x\vec K(s,x,y)|^p\right)^{1/p} \leq
\left(\int_{B_\rho(y)}|D_x\vec K(s,x,y)|^2\right)^{1/2}|B_\rho(y)|^{1/p-1/2}.
\end{equation}
As in \eqref{eq:E54a}, the energy inequality and \eqref{eq:E15} yield
\begin{equation}
\label{eq:E71b}
\int_t^\infty\!\!\!\int_{\Omega}|D_x\vec K(s,x,y)|^2\,dx\,ds
\leq C \rho^{-2} e^{-4\lambda\gamma(t-\rho^2)}\quad \forall t>\rho^2.
\end{equation}
Then, as in \eqref{eq:E54b}, we estimate
$I_6$ by combining \eqref{eq:E71a} and \eqref{eq:E71b}
\begin{equation}
\label{eq:E31h}
I_6 \leq C(p) \rho^{2/p}\sum_{j=1}^\infty
\left(\int_{j\rho^2}^{(j+1)\rho^2}\!\!\!\int_\Omega
|D_x\vec K(s,x,y)|^2\,dx\,ds\right)^{1/2}
\leq C(p) \rho^{2/p-1}(1+\rho^{-2}\gamma^{-1}).
\end{equation}
We obtain \eqref{eq:E27} by adding \eqref{eq:E31g} and \eqref{eq:E31h}.

Finally, we prove \eqref{eq:E28}.
By using Minkowski's inequality again, we have
\begin{equation}
\nonumber
\left(\int_{\Omega\setminus B_r(y)}|D_x\bar{\vec K}(t,x,y)|^2\,dx\right)^{1/2}
\leq \int_0^{r^2}
+ \int_{r^2}^\infty\left(\int_{\Omega\setminus B_r(y)}
|D_x \vec K(s,x,y)|^2\,dx\right)^{1/2}ds
\coloneqq I_7+I_8.
\end{equation}
We estimate $I_7$ by using H\"older's inequality and \eqref{eq5.15b}:
\begin{equation}
\label{eq:E31v}
I_7 \leq \left(\int_0^{r^2}\!\!\!\int_{\Omega\setminus B_r(y)}
|D_x\vec K(s,x,y)|^2\,dx\,ds\right)^{1/2} r \leq C.
\end{equation}
Also, by using \eqref{eq:E71b} and proceeding as in \eqref{eq:E31h}, we obtain
\begin{equation}
\label{eq:E29q}
I_8 \leq
r\sum_{j=1}^\infty \left(\int_{jr^2}^{(j+1)r^2}\!\!\!\int_\Omega
|D_x\vec K(s,x,y)|^2\,dx\,ds\right)^{1/2} \leq C(1+r^{-2}\gamma^{-1}).
\end{equation}
Therefore, \eqref{eq:E28} follows from \eqref{eq:E31v} and \eqref{eq:E29q}.
The lemma is proved.
\end{proof}

Fix $p_0\in(1,2)$ and $r\leq d_y$. By \eqref{eq:g01} and  \eqref{eq:E25},
there exists a sequence
$\{t_m\}_{m=1}^\infty$ tending to infinity such that
$\bar{\vec K}(t_m,\cdot,y)\wto \vec G(\cdot,y)$ weakly in $L^{p_0}(B_r(y))$,
and thus we have
\begin{equation*}
\norm{\vec G(\cdot,y)}_{L^{p_0}(B_r(y))}\leq C(p_0,\gamma,d_y,r)<\infty
\quad \forall r\leq d_y.
\end{equation*}
By a similar reasoning, \eqref{eq:E26} yields that
\begin{equation*}
\norm{\vec G(\cdot,y)}_{L^4(\Omega\setminus B_r(y))}\leq
C \gamma^{-1}r^{-3/2} \quad \forall r\in (0,d_y].
\end{equation*}
The above inequalities together with \eqref{eq:E23} imply that
$\vec G(x,\cdot)$ is locally integrable for any $x\in\Omega$.
Therefore, the integral in \eqref{eqZ-70} is absolutely convergent
for any $\vec f\in C^\infty_c(\Omega)$, and thus $\vec u$ is well defined
in \eqref{eqZ-70}.
Moreover, \eqref{eq:E25} and \eqref{eq:E26} together with \eqref{eq:E43x} imply
\begin{equation}
\label{eq:E29x}
\vec v(t,x)\coloneqq \int_\Omega \bar{\vec K}(t,x,y)\vec f(y)\,dy
\end{equation}
is well defined.
By the dominated convergence theorem, we also find that
\begin{equation}
\label{eq:E29a}
\lim_{t\to\infty}\vec v(t,x)=\int_\Omega \vec G(x,y)\vec f(y)\,dy=\vec u(x).
\end{equation}
Also, by the definition of $\bar{\vec K}(t,x,y)$ in \eqref{eq:k02},
it is easy to verify
\begin{equation}
\label{eq:E29i}
\vec v_t(t,x)=\int_\Omega \vec K(t,x,y)\vec f(y)\,dy\quad \forall t>0.
\end{equation}
Then, as in the proof of Lemma~\ref{lem:E01}, we have
\begin{equation}
\label{eq:E29b}
\norm{\vec v_t(t,\cdot)}_{L^2(\Omega)}^2\leq
C e^{-4\lambda\gamma t}\norm{\vec f}_{L^2(\Omega)}^2\quad \forall t>0.
\end{equation}
We need the following lemma to show that $\vec u$ is a weak solution in
$W^{1,2}_0(\Omega)^N$ of $L\vec u=-\vec f$.
The readers are asked to consult \cite{CDK} or \cite{LSU}
for the definition of $\rV^{1,0}_2((0,T)\times\Omega)$, etc.
\begin{lemma}
\label{lem:E02b}
For all $T>0$, the function $\vec v$ defined in \eqref{eq:E29x}
is the unique weak solution in $\rV^{1,0}_2((0,T)\times\Omega)^N$
of the problem
\begin{equation}
\label{eq:E32}
\vec v_t- L\vec v=\vec f,\quad
\vec v=0\,\text{ on }\,(0,T)\times\partial\Omega,\quad \vec v(0,\cdot)= 0.
\end{equation}
\end{lemma}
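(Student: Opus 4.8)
The plan is to realize $\vec v$ as the Duhamel solution generated by the ``Dirichlet heat kernel'' and to check the defining identity by hand. Put $\vec w(t,x):=\int_\Omega\vec K(t,x,y)\vec f(y)\,dy$, so that $\vec w=\vec v_t$ by \eqref{eq:E29i} and $\vec v(t,\cdot)=\int_0^t\vec w(s,\cdot)\,ds$. By the construction of the Dirichlet heat kernel in \cite{CDK}, $\vec w$ is a weak solution in $\rV^{1,0}_2((0,T)\times\Omega)^N$ of $\vec w_t-L\vec w=0$, $\vec w=0$ on $(0,T)\times\partial\Omega$, $\vec w(0,\cdot)=\vec f$; in particular $t\mapsto\vec w(t,\cdot)$ is continuous into $L^2(\Omega)$ with $\vec w(0,\cdot)=\vec f$. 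Testing this equation with $\vec w$ itself (as in the proof of Lemma~\ref{lem:E01}) gives the energy inequality $\lambda\int_0^\infty\Norm{D\vec w(s,\cdot)}_{L^2(\Omega)}^2\,ds\le\tfrac12\Norm{\vec f}_{L^2(\Omega)}^2$, while \eqref{eq:E29b} applied to $\vec v_t=\vec w$ yields $\Norm{\vec w(t,\cdot)}_{L^2(\Omega)}\le Ce^{-2\lambda\gamma t}\Norm{\vec f}_{L^2(\Omega)}$ for all $t>0$.

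First I would check $\vec v\in\rV^{1,0}_2((0,T)\times\Omega)^N$. Minkowski's integral inequality and the exponential decay of $\Norm{\vec w(t,\cdot)}_{L^2(\Omega)}$ give $\Norm{\vec v(t,\cdot)}_{L^2(\Omega)}\le\int_0^t\Norm{\vec w(s,\cdot)}_{L^2(\Omega)}\,ds\le C\gamma^{-1}\Norm{\vec f}_{L^2(\Omega)}$ for every $t>0$, while Minkowski's inequality followed by the Cauchy--Schwarz inequality in $s$ and the energy inequality for $\vec w$ give
\begin{align*}
\Norm{D\vec v(t,\cdot)}_{L^2(\Omega)}
&\le\int_0^t\Norm{D\vec w(s,\cdot)}_{L^2(\Omega)}\,ds\\
&\le t^{1/2}\Big(\int_0^t\Norm{D\vec w(s,\cdot)}_{L^2(\Omega)}^2\,ds\Big)^{1/2}
\le CT^{1/2}\Norm{\vec f}_{L^2(\Omega)}
\end{align*}
for $t\in(0,T)$, so $D\vec v\in L^2((0,T)\times\Omega)$. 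Since $\vec v_t=\vec w\in L^\infty((0,T);L^2(\Omega))$, the map $t\mapsto\vec v(t,\cdot)$ is Lipschitz into $L^2(\Omega)$ with $\vec v(0,\cdot)=0$, and $\vec v(t,\cdot)\in W^{1,2}_0(\Omega)$ for a.e.\ $t$ because $\vec w(s,\cdot)\in W^{1,2}_0(\Omega)$ for a.e.\ $s$. Hence $\vec v\in\rV^{1,0}_2((0,T)\times\Omega)^N$, and it vanishes on $(0,T)\times\partial\Omega$ and at $t=0$.

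Next I would verify the weak formulation. Fix $\vec\phi\in C^\infty_c((0,T)\times\Omega)^N$. Since $\vec v_t=\vec w$, it suffices to show $\int_0^T\!\!\int_\Omega w^i\phi^i+\int_0^T\!\!\int_\Omega A^{\alpha\beta}_{ij}D_\beta v^jD_\alpha\phi^i=\int_0^T\!\!\int_\Omega f^i\phi^i$. Writing $D_\beta v^j(t,\cdot)=\int_0^t D_\beta w^j(s,\cdot)\,ds$ and applying Fubini (justified by the $L^2$-bounds above and the compact support of $\vec\phi$), the second integral equals $\int_0^T\big(\int_\Omega A^{\alpha\beta}_{ij}D_\beta w^j(s,\cdot)\,D_\alpha\Psi^i(s,\cdot)\big)\,ds$, where $\Psi^i(s,x):=\int_s^T\phi^i(t,x)\,dt$. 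The function $\vec\Psi$ is smooth in $x$ with support in a fixed compact subset of $\Omega$, Lipschitz in $s$, and satisfies $\vec\Psi(T,\cdot)=0$ and $\partial_s\vec\Psi(s,\cdot)=-\vec\phi(s,\cdot)$, hence is an admissible test function in the weak formulation of the Cauchy problem for $\vec w$; this yields
\begin{align*}
\int_0^T\!\!\int_\Omega A^{\alpha\beta}_{ij}D_\beta w^j\,D_\alpha\Psi^i
&=\int_0^T\!\!\int_\Omega w^i\,\partial_s\Psi^i+\int_\Omega f^i(x)\,\Psi^i(0,x)\,dx\\
&=-\int_0^T\!\!\int_\Omega w^i\phi^i+\int_0^T\!\!\int_\Omega f^i\phi^i,
\end{align*}
where we used $\int_\Omega f^i(x)\Psi^i(0,x)\,dx=\int_0^T\!\!\int_\Omega f^i\phi^i$. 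Substituting back proves the identity, so $\vec v$ is a weak solution of \eqref{eq:E32}. For uniqueness, the difference $\vec v$ of two solutions lies in $\rV^{1,0}_2((0,T)\times\Omega)^N$ and solves the homogeneous problem with $\vec v(0,\cdot)=0$; testing with (a Steklov average of) $\vec v$ and using \eqref{eqP-02} gives $\tfrac12\Norm{\vec v(t,\cdot)}_{L^2(\Omega)}^2+\lambda\int_0^t\Norm{D\vec v(s,\cdot)}_{L^2(\Omega)}^2\,ds\le0$ for every $t\in(0,T)$, so $\vec v\equiv0$.

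The step I expect to be the main obstacle is the weak-formulation computation: choosing the ``backward'' test function $\vec\Psi(s,x)=\int_s^T\phi^i(t,x)\,dt$ and keeping careful track of how the initial datum $\vec w(0,\cdot)=\vec f$ re-enters, with the correct sign, as the source term. The regularity of $\vec v$, the Fubini and integration-by-parts justifications, and the uniqueness argument are routine, once one invokes the fact supplied by \cite{CDK} that convolving the Dirichlet heat kernel $\vec K$ with $\vec f$ produces the weak solution of the homogeneous Cauchy problem with datum $\vec f$.
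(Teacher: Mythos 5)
Your proof is correct, but it takes a genuinely different route from the paper. The paper does not verify the weak formulation directly; instead, it invokes the known existence and uniqueness of a weak solution $\vec w\in\rV^{1,0}_2((0,T)\times\Omega)^N$ of \eqref{eq:E32} from \cite{LSU}, and then shows $\vec w\equiv\vec v$ by a representation formula: writing $\vec w$ in terms of the parabolic Green's matrix ${}^t\vec\Gamma(s,y,t,x)$ of the transpose operator $\cLt=-\partial_t-{}^tL$, applying the duality relation ${}^t\vec\Gamma(s,y,t,x)=\vec\Gamma(t,x,s,y)^T$ from \cite[Lemma~3.5]{CDK}, and performing the change of variable $s\mapsto t-s$ (using $\vec K(t,\cdot,\cdot)\equiv 0$ for $t<0$) to land exactly on the Duhamel integral $\int_0^t\!\int_\Omega K_{ki}(s,x,y)f^i(y)\,dy\,ds = v^k(t,x)$. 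Your approach is a hands-on Duhamel verification: you establish $\vec v\in\rV^{1,0}_2$ from energy estimates for $\vec w$, check the weak formulation by swapping orders of integration and testing the homogeneous equation for $\vec w$ against the ``backward'' function $\vec\Psi(s,x)=\int_s^T\vec\phi(t,x)\,dt$, and prove uniqueness by an energy argument. Both are valid; the paper's proof is shorter because it offloads all the regularity and weak-formulation bookkeeping to the cited existence theorem and instead relies on the parabolic symmetry identity, while yours is more self-contained and makes the underlying Duhamel structure explicit, at the cost of having to be careful with the $\rV^{1,0}_2$ membership, Fubini, and the sign in which the initial datum re-enters through $\vec\Psi(0,\cdot)$.

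One remark on your regularity check: the bound $\Norm{D\vec v(t,\cdot)}_{L^2(\Omega)}\le Ct^{1/2}\Norm{\vec f}_{L^2}$ suffices for $D\vec v\in L^2((0,T)\times\Omega)$ on each finite interval, which is all that $\rV^{1,0}_2((0,T)\times\Omega)$ requires; you do not need (and should not try to claim) a $T$-uniform $L^2_tL^2_x$ bound on $D\vec v$, since indeed the bound degenerates as $T\to\infty$. Also, the statement that $\vec v(t,\cdot)\in W^{1,2}_0(\Omega)$ rests on the Bochner-integrability of $s\mapsto\vec w(s,\cdot)$ in $W^{1,2}_0(\Omega)$ over $(0,t)$, which your energy inequality together with $\sup_s\Norm{\vec w(s,\cdot)}_{L^2}\le\Norm{\vec f}_{L^2}$ does supply; it would be worth stating this explicitly rather than leaving it implicit.
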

\begin{proof}
Let $\vec w$ be the weak solution in $\rV^{1,0}_2((0,T)\times\Omega)^N$
of the problem \eqref{eq:E32},
the existence and uniqueness of which can be found in \cite{LSU}.
We only need to show that $\vec v\equiv \vec w$ in $(0,T)\times\Omega$.
Fix $t\in (0,T)$ and $x\in\Omega$.
Let ${}^t\vec{\Gamma}(s,y,t,x)$ be the parabolic Green's matrix of
$\cLt\coloneqq -\partial_t-{}^tL$ constructed as in \cite{CDK}.
Then, by proceeding similarly as in the proof of \cite[Theorem~2.7]{CDK},
we obtain (c.f. \cite[Lemma~3.1]{CDK})
\begin{align*}
w^k(t,x)&=
\int_0^T\!\!\!\int_\Omega {}^t\Gamma_{ik}(s,y,t,x)f^i(y)\,dy\,ds=
\int_0^T\!\!\!\int_\Omega \Gamma_{ki}(t,x,s,y)f^i(y)\,dy\,ds\\
&=\int_0^t\!\!\!\int_\Omega K_{ki}(t-s,x,y)f^i(y)\,dy\,ds
=\int_0^t\!\!\!\int_\Omega K_{ki}(s,x,y)f^i(y)\,dy\,ds
=v^k(t,x),
\end{align*}
where we have used $\vec K(t,x,y)\equiv 0$ for $t<0$.
The lemma is proved.
\end{proof}

Note that \eqref{eq:E29b} particularly implies
$\vec v_t(t,\cdot)\in L^2(\Omega)^N$, and thus it is not hard to verify
\begin{equation}
\label{eq:E35}
\int_\Omega v^i_t(t,\cdot)\phi^i+
\int_\Omega A^{\alpha\beta}_{ij}Dv^j(t,\cdot)D_\alpha\phi^i=
\int_\Omega f^i\phi^i\quad \forall\vec\phi\in W^{1,2}_0(\Omega)^N
\quad\forall t>0.
\end{equation}
Then, by setting $\vec\phi=\vec v(t,\cdot)$ above, we find that
for almost all $t>0$,
\begin{align*}
\lambda\norm{D \vec v(t,\cdot)}_{L^2(\Omega)}^2 &\leq
\left(\norm{\vec f}_{L^2(\Omega)}+\norm{\vec v_t(t,\cdot)}_{L^2(\Omega)}\right)
\norm{\vec v(t,\cdot)}_{L^2(\Omega)} \\
&\leq C\gamma^{-1/2}\norm{\vec f}_{L^2(\Omega)}
\norm{D\vec v(t,\cdot)}_{L^2(\Omega)},
\end{align*}
where we have used \eqref{eq:E29b} and Lemma~\ref{lem:poincare}.
Therefore, for almost all $t>0$,
\begin{equation}
\norm{D \vec v(t,\cdot)}_{L^2(\Omega)} \leq
C\gamma^{-1/2}\norm{\vec f}_{L^2(\Omega)}.
\end{equation}
Then, by the weak compactness and \eqref{eq:E29a}, we find that there exists
an increasing sequence $\set{t_m}_{m=1}^\infty$ tending to infinity such that
\begin{equation}
\label{eq:E36}
\lim_{m\to\infty}\int_\Omega A^{\alpha\beta}_{ij}Dv^j(t_m,\cdot)D_\alpha\phi^i=
\int_\Omega A^{\alpha\beta}_{ij}Du^jD_\alpha\phi^i
\quad \forall\vec\phi\in W^{1,2}_0(\Omega)^N.
\end{equation}
Therefore, it follows from
\eqref{eq:E29b}, \eqref{eq:E35}, and \eqref{eq:E36} that $\vec u$ defined in
\eqref{eqZ-70} is a weak solution in $W^{1,2}_0(\Omega)^N$ of $L\vec u=-\vec f$.

Now, we prove the uniqueness. Suppose that there exists another
matrix valued function $\tilde{\vec G}(x,y)$ such that
$\tilde{\vec G}(x,y)$ is continuous
in $\set{(x,y)\in\Omega\times\Omega:x\neq y}$,
$\tilde{\vec G}(x,\cdot)$ is locally integrable in $\Omega$
for all $x\in\Omega$, and for all
$\vec f \in C^\infty_c(\Omega)^N$, the function
$\tilde{\vec u}(x)\coloneqq \int_\Omega \tilde{\vec G}(x,y)\vec f(y)\,dy$
is a weak solution of $L\tilde{\vec u}= -\vec f$ in $W^{1,2}_0(\Omega)^N$.
Then, the difference $\vec w\coloneqq \vec u-\tilde{\vec u}$ is a weak solution
of $L\vec w=0$ in $W^{1,2}_0(\Omega)^N$, and thus $\vec w\equiv 0$.
Therefore, we have
\begin{equation}
\label{eq:E50w}
\int_\Omega (\vec G-\tilde{\vec G})(x,y)\vec f(y)\,dy=0
\quad \forall \vec f\in C^\infty_c(\Omega)^N.
\end{equation}
We conclude from \eqref{eq:E50w} that
$\vec G(x,\cdot)\equiv \tilde{\vec G}(x,\cdot)$ in $\Omega\setminus\{x\}$
 for all $x\in\Omega$, and thus $\vec G(x,y)=\tilde{\vec G}(x,y)$
for all $x,y\in\Omega$ with $x\neq y$. We have proved the uniqueness.

\subsection{Proof of identities \eqref{eqG-54} and \eqref{eqG-55}}
\label{sec3.2}
Let us first prove \eqref{eqG-54}.
From \eqref{eq:k01}, \eqref{eq:k02} and
the construction of parabolic Green's matrix
$\vec{\Gamma}(t,x,s,y)$ in \cite{CDK}, it follows (c.f. (3.3) in \cite{CDK})
\begin{equation}
\label{eq:E01}
\int_\Omega K_{ki}(t,\cdot,y)\phi^i+
\int_\Omega A^{\alpha\beta}_{ij}D_\beta\bar K_{jk}(t,\cdot,y)
D_\alpha\phi^i=\phi^k(y)\quad\forall \vec\phi\in C^\infty_c(\Omega)^N.
\end{equation}
We note that \eqref{eq:E15} yields
\begin{equation}
\label{eq:E02}
\lim_{t\to\infty}\int_\Omega K_{ki}(t,\cdot,y)\phi^i=0\quad\forall k=1,\ldots,N.
\end{equation}
If we write $\vec\phi=\zeta\vec\phi +(1-\zeta)\vec\phi$, where
$\zeta\in C^\infty_c(B_r(y))$ such that $\zeta\equiv 1$ on $B_{r/2}(y)$, then
\begin{align}
\nonumber
\int_\Omega A^{\alpha\beta}_{ij}D_\beta\bar K_{jk}(t,\cdot,y) D_\alpha\phi^i
&=\int_{B_r(y)} A^{\alpha\beta}_{ij}D_\beta\bar K_{jk}(t,\cdot,y)
D_\alpha(\zeta\phi^i)\\
\label{eq:E03}
&\quad +\int_{\Omega\setminus B_{r/2}(y)}
A^{\alpha\beta}_{ij}D_\beta\bar K_{jk}(t,\cdot,y) D_\alpha((1-\zeta)\phi^i)
\coloneqq I_1(t)+I_2(t).
\end{align}
By Lemma~\ref{lem:E02} and \eqref{eq:g01}, we find that there exists
an increasing sequence $\set{t_m}_{m=1}^\infty$ tending to infinity such that
\begin{align}
\label{eq:E04}
\lim_{m\to\infty} I_1(t_m)
&=\int_{B_r(y)} A^{\alpha\beta}_{ij}D_\beta G_{jk}(\cdot,y)
D_\alpha(\zeta\phi^i),\\
\label{eq:E05}
\lim_{m\to\infty} I_2(t_m)
&=\int_{\Omega\setminus B_{r/2}(y)}
A^{\alpha\beta}_{ij}D_\beta G_{jk}(t,\cdot,y) D_\alpha((1-\zeta)\phi^i).
\end{align}
Therefore, by combining \eqref{eq:E01}--\eqref{eq:E05},
we obtain \eqref{eqG-54}.

Next, we prove \eqref{eqG-55}.
We claim
\begin{equation}
\label{eq:E18}
\norm{(1-\eta)\bar{\vec K}(t,\cdot,y)}_{W^{1,2}(\Omega)}\le C(\eta,\gamma)
<\infty \quad \forall t>0,
\end{equation}
where $\eta\in C^\infty_c(\Omega)$ is such that
$\eta\equiv 1$ on $B_r(y)$ for some $r<d_y$.
Assume for the moment that the claim is true.
Then, by the weak compactness and \eqref{eq:g01}, there exists
an increasing sequence $\{t_m\}_{m=1}^\infty$ tending to infinity such that
\begin{equation*}
(1-\eta)\bar{\vec K}(t_m,\cdot,y)\wto (1-\eta)\vec G(\cdot,y)
\quad\text{weakly in }W^{1,2}(\Omega).
\end{equation*}
On the other hand, by \cite[Theorem~2.7]{CDK}, we find that
$(1-\eta)\bar{\vec K}(t,\cdot,y) \in W^{1,2}_0(\Omega)$ for all $t>0$.
Since $W^{1,2}_0(\Omega)$ is weakly closed in $W^{1,2}(\Omega)$,
we have $(1-\eta)\vec G(\cdot,y)\in W^{1,2}_0(\Omega)$ as desired.
To complete the proof of \eqref{eqG-55}, it remains to prove the claim
\eqref{eq:E18}.
In fact, by Lemma~\ref{lem:poincare}, it is enough to show
\begin{equation}
\label{eq:E20}
\norm{D((1-\eta)\bar{\vec K}(t,\cdot,y))}_{L^2(\Omega)}\le C(\eta)
<\infty \quad \forall t>0.
\end{equation}
Let us prove \eqref{eq:E20}.
Assume that $\eta$ is supported in a ball $B\subset \bR^2$.
Then
\begin{align}
\nonumber
\norm{D\big((1-\eta)\bar{\vec K}(t,\cdot,y)\big)}_{L^2(\Omega)}
&\le \norm{1-\eta}_{L^\infty}
\norm{D\bar{\vec K}(t,\cdot,y)}_{L^2(\Omega\setminus B_r(y))}
+\norm{D\eta}_{L^\infty}
\norm{\bar{\vec K}(t,\cdot,y)}_{L^2(B \setminus B_r(y))}\\
\nonumber
&\le C(\eta) \norm{D\bar{\vec K}(t,\cdot,y)}_{L^2(\Omega\setminus B_r(y))}
+C(\eta)|B|^{1/4}
\norm{\bar{\vec K}(t,\cdot,y)}_{L^4(\Omega\setminus B_r(y))}\\
\label{eq:E21v}
&\leq C(\eta) C(\gamma,r)=C(\eta,\gamma)<\infty\qquad\forall t>0,
\end{align}
where we have used Lemma~\ref{lem:E02} in the last step.
This completes the proof of \eqref{eq:E20}, and thus \eqref{eqG-55} is proved.
\subsection{Proof of logarithmic bound \eqref{eqG-56} and $L^p$ estimates}
\label{sec3.3}
Without loss of generality, we may assume $d_y \geq d_x$;
otherwise, in light of \eqref{eq:E23}, we may exchange the role of $x$ and $y$.
Note that if $|x-y|<R=d_y/2$, then \eqref{eq5.18a} yields
\begin{equation}
\label{eq5.18y}
|\vec K(t,x,y)| \leq C\left\{\max\big(\sqrt{t},|x-y|\big)\right\}^{-2}
\quad \forall t \in (0,R^2).
\end{equation}
On the other hand, if we set $\rho=R/2$
in \eqref{eq:E13} and $r=\sqrt{3}R/2$
in \eqref{eq:E15} (note that $\rho<d_x$, $r<d_y$, and $\rho^2+r^2=R^2$),
then \eqref{eq:E15h} becomes
\begin{equation}
\label{eq:E15j}
|\vec K(t,x,y)|\leq C R^{-2} e^{-2\lambda\gamma(t-R^2)}\quad \forall t>R^2.
\end{equation}
Then, by using \eqref{eq5.18y} and \eqref{eq:E15j},
we obtain (recall $|x-y|<R$)
\begin{align}
\nonumber
|\vec G(x,y)|
&\leq C\left(\int_0^{|x-y|^2}|x-y|^{-2}\,dt+ \int_{|x-y|^2}^{R^2} t^{-1}\,dt
+ \int_{R^2}^\infty R^{-2}e^{-2\lambda\gamma(t-R^2)}\,dt \right)\\
\nonumber
&=C\big(1+2\ln (R/|x-y|)+ R^{-2}(2\lambda\gamma)^{-1}\big)\\
\label{eq:R12p}
&\leq C\big(R^{-2}\gamma^{-1}+ \ln(R/|x-y|)\big).
\end{align}
We have thus proved \eqref{eqG-56}.
We now turn to the proof of local $p$-summability of $\vec G(\cdot,y)$
and $D\vec G(\cdot,y)$.
Note that \eqref{eq:R12p} particularly implies that
\begin{equation}
\label{eq:R12q}
\norm{\vec G(\cdot,y)}_{L^p(B_\rho(y))} \leq C(p,d_y,\gamma)<\infty
\quad \forall \rho\in (0,d_y/2] \quad
\forall p \in[1,\infty).
\end{equation}
We claim that
$|D\vec G(\cdot,y)|\in L^p(B_\rho(y))$ for all $0<\rho<d_y$ and $1\leq p<2$.
Let $\vec u$ be the $k$-th column of $\vec G(\cdot,y)$.
Then, by \eqref{eqG-55}, we have
\begin{equation*}
\vec u \in W^{1,2}(\Omega\setminus B_\rho(y))^N\quad \forall \rho \in (0,d_y)
\end{equation*}
and thus, by \eqref{eqG-54}, we find that $\vec u$ is a weak solution of
$L \vec u = 0$ in $\Omega\setminus B_\rho(y)$ for any $\rho<d_y$.
It follows from \eqref{eqG-56} that there is $r_0=r_0(\gamma,d_y)<1$ and
$C_0=C_0(\gamma,d_y)<\infty$ such that
\begin{equation}
\label{eqG-56b}
|\vec G(x,y)| \leq C_0 \ln(1/|x-y|)
\quad \forall x\in B_{r_0}(y).
\end{equation}
Fix $r<r_0$ and let $\zeta\in C^\infty_c(B_r(y))$ be a cut-off function
satisfying $\zeta\equiv 1$ on $\overline{B}_{r/2}(y)$ and $|D\zeta| \leq C/r$.
Then, by \eqref{eqG-55} we find
\begin{equation}
\label{eqn:V01a}
(1-\zeta)^2\vec u \in W^{1,2}_0(\Omega')^N\quad
\text{where } \Omega'\coloneqq \Omega\setminus\overline{B}_{r/2}(y).
\end{equation}
Since $\vec u$ is a weak solution in $W^{1,2}(\Omega')^N$ of $L\vec u=0$,
by using \eqref{eqn:V01a} we have
\begin{equation*}
0=\int_{\Omega'}(1-\zeta)^2 A^{\alpha\beta}_{ij} D_\beta u^j D_\alpha u^i
-\int_{\Omega'} 2(1-\zeta) A^{\alpha\beta}_{ij}D_\beta u^j D_\alpha\zeta\,u^i.
\end{equation*}
Therefore, by using the bound \eqref{eqG-56b} we estimate
\begin{equation*}
\int_{\Omega\setminus B_r(y)}\abs{D \vec u}^2
\le C r^{-2}\int_{B_r(y)\setminus B_{r/2}(y)}\abs{\vec u}^2
\le C C_0^2(\ln(r/2))^2.
\end{equation*}
Therefore, we have
\begin{equation}
\label{eqE-34}
\int_{\Omega\setminus B_r(y)}\abs{D \vec G(\cdot,y)}^2
\le C C_0^2(\ln(r/2))^2
\quad\forall r<r_0.
\end{equation}
Next, let $A_t=\set{x\in\Omega:\abs{D_x\vec G(x,y)}>t}$ and choose $r=2/t$.
Then by \eqref{eqE-34}
\begin{equation*}
\abs{A_t\setminus B_r(y)}\le t^{-2}
\int_{A_t\setminus B_r(y)}\abs{D\vec G(\cdot,y)}^2 \le C C_0^2 t^{-2}(\ln t)^2
\quad \forall t> 2/r_0.
\end{equation*}
and $\abs{A_t\cap B_r(y)}\leq |B_r(y)|\leq Ct^{-2}$.
Therefore, we conclude that for any $y\in\Omega$, there exist
$C_1=C_1(\gamma,d_y)<\infty$ and $t_0=t_0(\gamma,d_y)>0$ such that
\begin{equation}
\label{eqE-36}
\abs{\set{x\in\Omega:\abs{D_x \vec G(x,y)}>t}}\le
C_1 t^{-2}(\ln t)^2\quad \forall t>t_0.
\end{equation}
From the estimates \eqref{eqE-36}, it follows that
$|D\vec G(\cdot,y)|\in L^p(B_r(y))$ for all $r< d_y$ and for all $p\in [1,2)$
as we shall demonstrate below. Let $r<d_y$ be given and choose $\tau>t_0$.
Note that
\begin{align*}
\int_{B_r(y)} |D\vec G(\cdot,y)|^p&=
\int_{B_r(y)\cap \{\,|D\vec G(\cdot,y)|\le \tau\}} |D\vec G(\cdot,y)|^p+
\int_{B_r(y)\cap \{\,|D\vec G(\cdot,y)|>\tau\}} |D \vec G(\cdot,y)|^p\\
&\le  \tau^p\abs{B_r(y)}+
\int_{\{\,|D\vec G(\cdot,y)|> \tau\}} |D\vec G(\cdot,y)|^p.
\end{align*}
By using \eqref{eqE-36}, we estimate (recall $\tau>t_0$)
\begin{align*}
\int_{\{\,|D\vec G(\cdot,y)|> \tau\}} |D\vec G(\cdot,y)|^p
&=\int_0^\infty p t^{p-1} |\{\abs{D\vec G(\cdot,y)}> \max(t,\tau)\}|\,dt\\
&\le C_1 \tau^{-2}(\ln \tau)^2\int_0^\tau p t^{p-1}\,dt
+ C_1 \int_\tau^\infty p t^{p-3}(\ln t)^2\,dt.
\end{align*}
Note that the above integrals converge if $0<p<2$,
and thus we have shown that
\begin{equation}
\label{eq:R12w}
\int_{B_r(y)}|D\vec G(\cdot,y)|^p \leq C(p,\gamma,d_y,r)<\infty\quad
\forall r\in (0,d_y) \quad \forall p\in [1,2).
\end{equation}
On the other hand, \eqref{eq:E28} yields
\begin{equation}
\label{eq:T28a}
\norm{D\vec G(\cdot,y)}_{L^2(\Omega\setminus B_r(y))}\le
C \gamma^{-1}r^{-2} \quad \forall r \in (0, d_y).
\end{equation}
By combining \eqref{eq:R12w} and \eqref{eq:T28a}, we find
\begin{equation}
\label{eq:T28b}
\norm{D \vec G(\cdot,y)}_{L^p(B_r(y)\cap\Omega)}<C(p,\gamma,d_y,r)<\infty
\quad \forall r>0\quad \forall p\in [1,2).
\end{equation}
Next, for $r\geq d_y/2$, fix a cut-off function
$\zeta\in C^\infty_c(B_{2r}(y)\setminus \overline B_{d_y/4}(y))$
such that $\zeta\equiv 1$ on
$\overline B_r(y)\setminus B_{d_y/2}(y)$ and $|D\zeta|\leq C/d_y$.
By a similar computation as in \eqref{eq:E21v}, we have
\begin{equation*}
\norm{D(\zeta \vec G(\cdot,y))}_{L^2(B_{2r}(y)\cap\Omega)}
\leq C(\gamma,d_y,r)<\infty.
\end{equation*}
Since $\zeta \vec G(\cdot,y)\in W^{1,2}_0(B_{2r}(y)\cap \Omega)$,
the Sobolev inequality yields
\begin{equation}
\label{eq:T29c}
\norm{\vec G(\cdot,y)}_{L^p(B_r(y)\cap\Omega\setminus\overline B_{d_y/2}(y))}
<C(p,\gamma,d_y,r)<\infty \quad \forall p\in [1,\infty).
\end{equation}
Then by combining \eqref{eq:R12q} and \eqref{eq:T29c}, we obtain
\begin{equation}
\label{eq:T30d}
\norm{\vec G(\cdot,y)}_{L^p(B_r(y)\cap\Omega)}<C(p,\gamma,d_y,r)<\infty
\quad \forall r>0\quad \forall p\in [1,\infty).
\end{equation}
Finally, from \eqref{eq:E23}, \eqref{eq:T28b}, and \eqref{eq:T30d},
it follows that $|D\vec G(x,\cdot)|$ belongs to $L^p(B_r(x)\cap\Omega)$
for all $r>0$ and $1\leq p<2$ and that $|\vec G(x,\cdot)|$ belongs to
$L^p(B_r(x)\cap\Omega)$ for all $r>0$ and $1\leq p<\infty$.
This completes the proof of the theorem.

\mysection{Proof of Theorem~\ref{thm2}} \label{sec4}
Throughout this section, we employ the letter $C$ to denote a constant depending
on $\lambda, \Lambda, N$, and $M\coloneqq\norm{\varphi'}_\infty$.
We use $C(\alpha,\beta,\ldots)$ to denote a constant depending on quantities
$\alpha,\beta,\ldots,$ as well as $\lambda, \Lambda, N, M$.

For $x=(x_1,x_2)\in \Omega$, where $\Omega$ is as in \eqref{eq:LD01},
we shall denote $\bar x\coloneqq (x_1,\varphi(x_1))\in \partial\Omega$.
Note that $d_x$ is comparable to $|x-\bar x|$; more precisely, we have
\begin{equation}
\label{eq:H01t}
d_x\leq |x-\bar x| \leq  \sqrt{1+M^2}\,d_x\quad \forall x\in\Omega.
\end{equation}
We shall use the following notations.
\begin{align}
P_r^-(t_0,x_0)&\coloneqq \{(t,x)\in \bR\times\Omega: t_0-r^2<t<t_0,\, |x-x_0|<r\},\\
S_r^-(t_0,x_0)&\coloneqq \bR\times\partial\Omega\,\cap\,\partial P_r^-(t_0,x_0). 
\end{align}

We ask the readers to consult \cite{CDK} or \cite{LSU}
for the definition of the space $V_2$.
\begin{lemma}
\label{lem:H01}
Assume that the operator $L$ satisfy the conditions \eqref{eqP-02}
and \eqref{eqP-03}.
Let $\Omega$ be given as in \eqref{eq:LD01} and let
$\bar x_0\in \partial\Omega$. 
Assume that $\vec u(t,x)$ is a weak solution in $V_2(P_{2R}^-(t_0,\bar x_0))$
of $\vec u_t- L \vec u=0$ and vanishes on $S_{2R}^-(t_0,\bar x_0)$.
Then, for any $y_0\in B_R(\bar x_0)\cap \Omega$, we have
\begin{equation}
\label{eq6.18v}
\int_{P_\rho^-(t_0,y_0)}|D_x \vec u|^2 \leq
C\left(\frac{\rho}{r}\right)^{2+2\mu}
\int_{P_r^-(t_0,y_0)}|D_x \vec u|^2
\quad \forall \rho<r\leq R,
\end{equation}
where $\mu=\mu(\lambda,\Lambda,M)\in (0,1)$.
As a consequence, for all $t\in (t_0-R^2,t_0)$, we have
\begin{equation}
\label{eq6.18w}
|\vec u(t,x)-\vec u(t,x')| \leq C|x-x'|^\mu R^{-(1+\mu)}
\left(\int_{P_{2R}^-(t_0,\bar x_0)}|D_x\vec u|^2\right)^{1/2}
\quad \forall x,x'\in B_R(\bar x_0).
\end{equation}
\end{lemma}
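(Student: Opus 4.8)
The plan is to reduce this boundary estimate to the interior estimate Lemma~\ref{lem:P01} by flattening the boundary and then reflecting. Since $\partial\Omega$ is the graph of the Lipschitz function $\varphi$, the map $\Phi(x_1,x_2)\coloneqq(x_1,\,x_2-\varphi(x_1))$ is bi-Lipschitz with constants controlled by $M$, has unit Jacobian, and carries $\Omega$ onto $\{x_2>0\}$ and the part of $\partial\Omega$ near $\bar x_0$ onto a portion of $\{x_2=0\}$. Under this change of the spatial variable alone, $\vec V\coloneqq\vec u\circ\Phi^{-1}$ is a weak solution of a parabolic system $\vec V_t-D_\alpha(\tilde{\vec A}^{\alpha\beta}D_\beta\vec V)=0$ with $\tilde{\vec A}^{\alpha\beta}$ bounded measurable and still satisfying \eqref{eqP-02}--\eqref{eqP-03}, with constants now depending also on $M$; the point to notice is that the coordinate change acts only on the ``spatial'' index of $A^{\alpha\beta}_{ij}$ and not on the system index $i$, so the transformed quadratic form stays coercive in the sense of \eqref{eqP-02}, not merely in the Legendre--Hadamard sense. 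Since $\vec V$ vanishes, in the $W^{1,2}$ sense, on the flat part of the lateral boundary, its odd extension $(x_1,x_2)\mapsto-\vec V(x_1,-x_2)$ across $\{x_2=0\}$ --- together with the corresponding extension $\tilde A^{\alpha\beta}_{ij}(x_1,x_2)\mapsto\sigma_\alpha\sigma_\beta\tilde A^{\alpha\beta}_{ij}(x_1,-x_2)$ of the coefficients, where $\sigma_1=1$ and $\sigma_2=-1$ --- is again a $W^{1,2}_{\loc}$ weak solution of a parabolic system with the same structural constants, now on a full parabolic cylinder in which $\Phi(\bar x_0)$, and hence $\Phi(y_0)$, is an interior point. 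I keep calling this reflected solution $\vec V$.

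Next I apply the interior theory to $\vec V$. By Lemma~\ref{lem:P01}, together with its full space-time counterpart (the interior parabolic H\"older estimate for two-dimensional systems, cf.\ \cite{Kim}), there is $\mu=\mu(\lambda,\Lambda,M)\in(0,1)$ so that $|\vec V(z)-\vec V(z_1)|\le C(\delta/s)^{\mu}\,s^{-1}\norm{D_x\vec V}_{L^2(P_{2s}^-(z_1))}$ whenever $z$ lies in $P_s^-(z_1)$ at parabolic distance $\delta$ from $z_1$ and $P_{2s}^-(z_1)$ is in the domain. Fix $z_0=(t_0,\Phi(y_0))$ and let $\rho<r/4$ with $P_r^-(z_0)$ in the domain. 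Taking $z_1=z_0$ and $s=r/2$ gives $|\vec V(z)-\vec V(z_0)|^2\le C(\rho/r)^{2\mu}r^{-2}\int_{P_r^-(z_0)}|D_x\vec V|^2$ for all $z\in P_{2\rho}^-(z_0)$, so the interior energy (Caccioppoli) inequality used with the constant $\vec c=\vec V(z_0)$, together with $|P_{2\rho}^-|\simeq\rho^4$ in two space dimensions, yields
\[
\int_{P_\rho^-(z_0)}|D_x\vec V|^2\le\frac{C}{\rho^2}\int_{P_{2\rho}^-(z_0)}|\vec V-\vec V(z_0)|^2\le\frac{C}{\rho^2}\,\rho^4\Big(\frac{\rho}{r}\Big)^{2\mu}r^{-2}\int_{P_r^-(z_0)}|D_x\vec V|^2=C\Big(\frac{\rho}{r}\Big)^{2+2\mu}\int_{P_r^-(z_0)}|D_x\vec V|^2,
\]
while the range $r/4\le\rho<r$ is trivial. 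Undoing the reflection (which changes $|D_x\vec V|$ only by a reflection, so its $L^2$-norm over a full cylinder is comparable to that over the upper half) and the flattening (bi-Lipschitz, unit Jacobian, balls mapped to comparable balls) converts this into \eqref{eq6.18v} for $y_0$ and all $\rho<r\le c(M)R$; here $c(M)$ is chosen so that $P_{c(M)R}^-(t_0,\Phi(y_0))$ always lies in the doubled domain, which is possible because $y_0\in B_R(\bar x_0)$ forces $\dist(y_0,\partial B_{2R}(\bar x_0))>R$, a quantity diminished by at most an $M$-factor under $\Phi$ and preserved by the reflection. The remaining range $c(M)R\le r\le R$ then follows from $\int_{P_\rho^-}|D_x\vec u|^2\le\int_{P_r^-}|D_x\vec u|^2$ and the already-proved case $r=c(M)R$.

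Estimate \eqref{eq6.18w} is obtained in the same way: it is precisely the fixed-time H\"older estimate of Lemma~\ref{lem:P01} for $\vec V$ on the whole cylinder, transported back through $\Phi$ (a bi-Lipschitz map preserves $C^{\mu}$-continuity, with the seminorm multiplied by $(\operatorname{Lip}\Phi)^{\mu}$), after a routine covering argument to pass from $B_{c(M)R}(\bar x_0)$ to $B_R(\bar x_0)$; alternatively it follows from \eqref{eq6.18v} by the two-dimensional parabolic Campanato embedding. The step that takes the most care is the flattening-and-reflection of the first paragraph: checking that the bi-Lipschitz change of variables preserves the strong ellipticity \eqref{eqP-02} --- exactly where one uses that the change does not mix the system index --- that the odd-reflected coefficients again satisfy \eqref{eqP-02}--\eqref{eqP-03} with the same constants, and that the odd extension is genuinely a weak solution across the flattened boundary (testing against $\vec\phi$ minus its reflection $\vec\psi$ reduces the weak identity on the doubled cylinder to the one for $\vec V$ tested against the admissible function $\vec\phi-\vec\psi$, which vanishes on $\{x_2=0\}$). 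A secondary point is that one genuinely needs the interior H\"older estimate in the full space-time metric, not merely at fixed time, in order to bound $\int_{P_{2\rho}^-(z_0)}|\vec V-\vec V(z_0)|^2$ as above.
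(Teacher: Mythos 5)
Your argument is correct, but it takes a genuinely different route from the paper's. The paper's proof is essentially two citations: it invokes a boundary Morrey-type decay estimate for \emph{elliptic} systems in two-dimensional Lipschitz domains (crediting Morrey's book) to obtain
\begin{equation*}
\int_{B_\rho(y_0)\cap\Omega}|D\vec v|^2\le C\Big(\frac{\rho}{r}\Big)^{2\mu}\int_{B_r(y_0)\cap\Omega}|D\vec v|^2
\quad\forall\rho<r\le R,
\end{equation*}
and then says that a "routine adjustment of an argument in [Kim]'' — the standard perturbation machinery that upgrades an elliptic decay estimate to a parabolic one for time-independent coefficients — yields both \eqref{eq6.18v} and \eqref{eq6.18w}. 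You instead go directly to the parabolic problem: flatten the Lipschitz graph with $\Phi(x_1,x_2)=(x_1,x_2-\varphi(x_1))$, odd-reflect the solution and the (appropriately sign-twisted) coefficients across $\{x_2=0\}$, and apply the \emph{interior} parabolic Morrey/H\"older estimate of Lemma~\ref{lem:P01} together with Caccioppoli and the bi-Lipschitz invariance to produce the boundary decay. Both approaches ultimately rest on the special two-dimensional Morrey phenomenon for systems; the paper uses it on the boundary side (via the elliptic citation), you use it on the interior side and synthesize the boundary case yourself. Your route is more self-contained — in effect you unroll the reflection proof that underlies the cited boundary regularity result — and the two points you flag as delicate really are the crux: that a bi-Lipschitz change of the spatial variable preserves \eqref{eqP-02} (because it never mixes the system index) and that the odd reflection of both solution and coefficients produces a bona fide weak solution of a system with the same structural constants across $\{x_2=0\}$. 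The price is that your derivation of \eqref{eq6.18v} needs the full space-time H\"older estimate for interior weak solutions, not just the fixed-time version stated in Lemma~\ref{lem:P01}; you correctly note this is available in \cite{Kim}, but it is worth keeping visible since it is the one external input beyond the paper's own displayed lemmas.
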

\begin{proof}
Let $\vec v(x)$ be a weak solution in $W^{1,2}(\Omega\cap B_{2R}(\bar x_0))$
of $L\vec v =0$ which vanishes on $\partial\Omega\cap B_{2R}(\bar x_0)$.
Let $y_0\in B_R(\bar x_0)\cap \Omega$.
By a well-known boundary regularity theory for weak solutions
of elliptic systems in two dimensional Lipschitz domains
(see e.g. \cite{Morrey}), we have
\begin{equation}
\label{eq6.18a}
\int_{B_\rho(y_0)\cap \Omega}|D \vec v|^2 \leq
C\left(\frac{\rho}{r}\right)^{2\mu}
\int_{B_r(y_0)\cap \Omega}|D \vec v|^2
\quad \forall \rho<r\leq R,
\end{equation}
where $\mu=\mu(\lambda,\Lambda,M) \in(0,1)$.
By a routine adjustment of an argument in \cite{Kim},
one can deduce \eqref{eq6.18v} and \eqref{eq6.18w} from \eqref{eq6.18a}.
\end{proof}

Let $\Omega$ be given as in \eqref{eq:LD01}.
It is rather tedious but routine to check that the estimate \eqref{eq6.18v}
allows us to treat $\Omega$ as if $\Omega=\bR^2$ in the proof of
\cite[Theorem~2.7]{CDK}.
Consequently, we have the following estimates:
\begin{align}
\label{eq618a}
&|\vec K(t,x,y)| \leq C \{\max(|t|^{1/2},|x-y|)\}^{-2},\\
\label{eq615c}
&\int_\Omega |\vec K(t,x,y)|^2 \,dx \leq Ct^{-1}
\quad \forall t>0,\\
\label{eq615b}
&\iint_{(0,\infty)\times\Omega\setminus (0,r^2)\times B_r(y)}\!\!\!
|D_x\vec K(t,x,y)|^2 \,dx\,dt \leq Cr^{-2} \quad \forall r>0,\\
\label{eq619}
&\iint_{(0,r^2)\times (B_r(y)\cap \Omega)}\!\!\!
|D_x\vec K(t,x,y)|^p \,dx\,dt \leq C(p) r^{-3p+4}
\quad \forall r>0 \quad\forall p\in [1,4/3).
\end{align}

To show the convergence of the integral in \eqref{eq:g01},
we need the following lemma.
\begin{lemma}
\label{lem:H03}
Let $\Omega$ be given as in \eqref{eq:LD01}. 
There exists $\mu=\mu(\lambda,\Lambda,M)\in (0,1)$ such that
\begin{align}
\label{eq:H01r}
\abs{\vec K(t,x,y)}&\leq C d_x^{\mu} \{\max(|t|^{1/2},|x-y|)\}^{-2-\mu}
\quad\forall x,y\in\Omega,\quad x\neq y.
\end{align}
\end{lemma}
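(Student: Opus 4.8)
The plan is to derive the decay estimate \eqref{eq:H01r} from the already-established size bound \eqref{eq618a} by exploiting the fact that $\vec K(\cdot,\cdot,y)$ vanishes on the lateral boundary $\mathbb R\times\partial\Omega$, which forces extra H\"older gain near $\partial\Omega$. Fix $x,y\in\Omega$ with $x\neq y$ and set $R=\max(|t|^{1/2},|x-y|)$. There is nothing to prove when $d_x\geq R$ (then $d_x^\mu R^{-2-\mu}\gtrsim R^{-2}$ and \eqref{eq618a} already gives the bound), so assume $d_x<R$. Let $\bar x=(x_1,\varphi(x_1))\in\partial\Omega$ be the boundary projection of $x$; by \eqref{eq:H01t}, $|x-\bar x|$ is comparable to $d_x$.

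First I would observe that the $k$-th column $\vec u(s,z)=\vec K(s,z,y)$ is, by \cite[Theorem~2.7]{CDK}, a weak solution of $\vec u_s-L\vec u=0$ in $P_{2R_0}^-(t,\bar x)$ for a suitable radius $R_0$ comparable to $R$ (chosen so that $B_{2R_0}(\bar x)$ stays away from $y$; since $|x-y|\leq R$ and $d_x<R$ one has $|\bar x-y|\leq 2R$, so after halving constants this is fine), and that it vanishes on the lateral boundary portion $S_{2R_0}^-(t,\bar x)$. Applying the boundary H\"older estimate \eqref{eq6.18w} of Lemma~\ref{lem:H01} with $x'=\bar x$ (where $\vec u(t,\bar x)=0$) gives
\begin{equation*}
|\vec K(t,x,y)|=|\vec u(t,x)-\vec u(t,\bar x)|
\leq C\,|x-\bar x|^\mu R^{-(1+\mu)}
\left(\int_{P_{2R_0}^-(t,\bar x)}|D_z\vec K(s,z,y)|^2\,dz\,ds\right)^{1/2}.
\end{equation*}
Then I would bound the energy integral on the right by $CR^{-2}\cdot R^2=C$... more precisely, one wants $\int\!\!\int_{P_{2R_0}^-(t,\bar x)}|D_z\vec K|^2\leq C$; this follows from the Caccioppoli (energy) inequality applied on a parabolic cylinder of size $\sim R$ together with the $L^2$ bound \eqref{eq615c}, which gives $\int_\Omega|\vec K(s,z,y)|^2\,dz\leq Cs^{-1}$ and hence, integrating the energy inequality over the relevant time interval and using that the cylinder has time-length $\sim R^2$, a bound of the form $CR^{-2}\cdot R^2\cdot R^{-0}$; being careful, the clean statement is that $\int\!\!\int_{P_{R}^-}|D\vec K|^2\leq C\sup_{s}\int_\Omega|\vec K(s,\cdot,y)|^2\lesssim C R^{-2}$ when the cylinder is at distance $\gtrsim R$ from the time origin, or one uses \eqref{eq615b} directly if the cylinder avoids $(0,r^2)\times B_r(y)$. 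Combining, and using $|x-\bar x|\leq\sqrt{1+M^2}\,d_x$ from \eqref{eq:H01t},
\begin{equation*}
|\vec K(t,x,y)|\leq C\,d_x^\mu R^{-(1+\mu)}\cdot R^{-1}
= C\,d_x^\mu\{\max(|t|^{1/2},|x-y|)\}^{-2-\mu},
\end{equation*}
which is \eqref{eq:H01r}.

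The main obstacle I anticipate is the bookkeeping around the energy estimate: one must choose the parabolic cylinder $P_{2R_0}^-(t,\bar x)$ so that it simultaneously (i) lies in $\Omega$'s parabolic neighborhood where $\vec K$ solves the equation and vanishes on the lateral boundary, (ii) stays a definite distance from the singularity $(0,y)$ so that \eqref{eq615b} or the energy inequality plus \eqref{eq615c} gives the scale-invariant bound $\int\!\!\int|D\vec K|^2\leq CR^{-2}$, and (iii) has radius comparable to $R=\max(|t|^{1/2},|x-y|)$. When $|t|^{1/2}$ is much smaller than $|x-y|$ or vice versa, the geometry of these cylinders needs slightly different constants, and when $t$ itself is small one must confirm the cylinder does not reach back to negative times (where $\vec K\equiv 0$) in a way that breaks the argument — but since $\vec K\equiv 0$ for $s<0$ only helps, this is harmless. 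All of these are routine adjustments of the interior argument already invoked for \eqref{eq618a}--\eqref{eq619}, exactly as the remark preceding Lemma~\ref{lem:H03} indicates (``it is rather tedious but routine to check that \eqref{eq6.18v} allows us to treat $\Omega$ as if $\Omega=\mathbb R^2$''), so the proof reduces to applying Lemma~\ref{lem:H01} in place of Lemma~\ref{lem:P01} and tracking the extra factor $d_x^\mu$.
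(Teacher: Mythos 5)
Your proposal is essentially the paper's own proof: you reduce to the case $d_x \lesssim R$ via \eqref{eq618a}, apply the boundary H\"older estimate \eqref{eq6.18w} of Lemma~\ref{lem:H01} centered at $\bar x$ with $x'=\bar x$ (where $\vec u$ vanishes), and bound the resulting energy integral using the scale-invariant estimate \eqref{eq615b} after checking that the parabolic cylinder avoids the singular region. The only difference is presentational: the paper fixes the specific radius $R=r/4$ and the threshold $d_x < r/(10\sqrt{1+M^2})$ so that the cylinder demonstrably sits inside $(0,\infty)\times\Omega\setminus(0,R^2)\times B_R(y)$, and then applies \eqref{eq615b} directly (your ``clean statement'' option) rather than re-deriving the energy bound from Caccioppoli and \eqref{eq615c}.
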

\begin{proof}
Denote $r\coloneqq \max(|t|^{1/2},|x-y|)$.
We may assume that $d_x< r/(10\sqrt{1+M^2})$; otherwise, \eqref{eq:H01r} is
an easy consequence of \eqref{eq618a}.
Let $\vec u(t,x)$ be the $k$-th column of $\vec K(t,x,y)$ and set $R=r/4$.
Then by \eqref{eq:H01t}, \eqref{eq6.18w}, and \eqref{eq615b}, we have
\begin{align}
\nonumber
|\vec u(t,x)|&=|\vec u(t,x)-\vec u(t,\bar x)| \leq C |x-\bar x|^\mu R^{-1-\mu}
\left(\iint_{(0,\infty)\times\Omega\setminus
(0,R^2)\times B_R(y)}\!\!\!|D_x \vec u|^2\right)^{1/2}\\
\label{eq:H02u}
&\leq  C |x-\bar x|^\mu R^{-2-\mu} \leq C d_x^\mu r^{-2-\mu}.
\end{align}
We obtain \eqref{eq:H01r} from \eqref{eq:H02u}. The lemma is proved.
\end{proof}
Then,  it follows from \eqref{eq:H01r} that for all $x,y\in\Omega$
with $x\neq y$, we have
\begin{align}
\label{eq:H15a}
\int_0^\infty|\vec K(t,x,y)|\,dt
=\int_0^{|x-y|^2} + \int_{|x-y|^2}^\infty|\vec K(t,x,y)|\,dt
\leq C d_x^\mu |x-y|^{-\mu}<\infty.
\end{align}
Therefore, $\vec G(x,y)$ given the same as in \eqref{eq:g01}
is well defined and satisfies
\begin{equation}
\label{eq:H01k}
\abs{\vec G(x,y)}\leq C d_x^{\mu}|x-y|^{-\mu} \quad\forall x,y\in\Omega,
\quad x\neq y.
\end{equation}
In fact, by using \eqref{eq618a} and \eqref{eq:H01r} together,
we may obtain a better bound
\begin{align}
\nonumber
|\vec G(x,y)|
&\leq C\left(\int_0^{|x-y|^2}|x-y|^{-2}\,dt+ \int_{|x-y|^2}^{d_x^2} t^{-1}\,dt
+ \int_{d_x^2}^\infty d_x^\mu t^{-1-\mu/2}\,dt \right)\\
\label{eq:H01l}
&\leq C\big(1+\ln(d_x/|x-y|)\big)\qquad\text{if }\, |x-y|<d_x.
\end{align}
Then by combining \eqref{eq:H01k} and \eqref{eq:H01l}, we derive
(recall $\ln_+t\coloneqq \max(\ln t,0)$)
\begin{equation}
\label{eq:H01m}
|\vec G(x,y)| \leq C\min\left\{1+\ln_+(d_x/|x-y|),d_x^\mu |x-y|^{-\mu}\right\}
\quad \forall x,y\in\Omega,\quad x\neq y.
\end{equation}
Recall that \eqref{eq:E23} is a consequence of \eqref{eq:E43t}, which
remain valid here.
Therefore, \eqref{eq:H01j} follows from \eqref{eq:E23} and \eqref{eq:H01m}.
Note that \eqref{eq:H01j} implies that for any $r>0$ and $p\in[1,\infty)$
\begin{align}
\label{eq:H01w}
\norm{\vec G(x,\cdot)}_{L^p(B_r(x)\cap \Omega)}\leq C(p,d_x,r)<\infty;
\quad
\norm{\vec G(\cdot,y)}_{L^p(B_r(y)\cap \Omega)}\leq C(p,d_y,r)<\infty.
\end{align}
In particular, we have shown that $\vec G(x,\cdot)$ is locally integrable
for all $x\in\Omega$.
Next, we show that $\vec G(\cdot,y)$ is locally H\"older continuous
in $\Omega\setminus\{y\}$.
Fix $x_0\in \Omega$ with $x_0\neq y$ and choose $r>0$ such that $r<d_y$ and
$B_{2r}(x_0)\subset \Omega\setminus B_r(y)$.
Similarly as in \eqref{eq:H02u}, Lemma~\ref{lem:H01} yields
\begin{equation}
\label{eq:H21b}
|\vec K(t,x,y)-\vec K(t,x_0,y)| \le C |x-x_0|^\mu t^{-(1+\mu/2)}
\quad \forall x\in B_r(x_0)
\quad \forall t> t_0,
\end{equation}
where $t_0\coloneqq 8(r+\sqrt{1+M^2}d_{x_0})^2$.
Notice that \eqref{eq:E21a} still remains true here.
Therefore, by using \eqref{eq:H21b} instead of \eqref{eq:E21b} and 
proceeding as in \eqref{eq:E22} we obtain
\begin{equation}
\label{eq:H22d}
|\vec G(x,y)-\vec G(x_0,y)|\leq C|x-x_0|^\mu
\quad\forall x\in B_r(x_0).
\end{equation}
Then, it follows from \eqref{eq:E23} and \eqref{eq:H22d}
that $\vec G(x,y)$ is continuous in $\set{(x,y)\in\Omega\times\Omega:x\neq y}$.

Now let us prove that $\vec u$ defined as in \eqref{eqZ-70} is a unique
weak solution in $Y^{1,2}_0(\Omega)^N$ of $L\vec u=-\vec f$.
First observe that $\Omega$ is a Green domain.
Let $\vec v(t,x)$ be defined the same as in \eqref{eq:E29x}.
Then as in \eqref{eq:E29a}, we have $\lim_{t\to\infty} \vec v(t,x)=\vec u(x)$.
Also, $\vec v_t(t,x)$ has the same representation as in \eqref{eq:E29i}.
Then, by \eqref{eq615c} and Minkowski's inequality, we have
\begin{equation}
\label{eq:H29y}
\norm{\vec v_t(t,\cdot)}_{L^2(\Omega)}
\leq C t^{-1/2}\norm{\vec f}_{L^1(\Omega)}\quad\forall t>0
\end{equation}
and thus, by Lemma~\ref{lem:E02b}, we estimate
\begin{equation}
\label{eq:H29w}
\norm{\vec v(t,\cdot)}_{L^2(\Omega)}
\leq C \norm{\vec f}_{L^1(\Omega)} \int_0^t s^{-1/2}\,ds\leq C t^{1/2}
\norm{\vec f}_{L^1(\Omega)}\quad\forall t>0.
\end{equation}
Assume that $\vec f$ is supported in a ball $B\subset \bR^2$.
Then by setting $\vec \phi=\vec v(t,\cdot)$ in \eqref{eq:E35}, we get
\begin{align}
\nonumber
\lambda\norm{D\vec v(t,\cdot)}_{L^2(\Omega)}^2 &\leq
\norm{\vec v_t(t,\cdot)}_{L^2(\Omega)} \norm{\vec v(t,\cdot)}_{L^2(\Omega)}+
\norm{\vec f}_{L^2(\Omega\cap B)} \norm{\vec v(t,\cdot)}_{L^2(\Omega\cap B)}\\
\label{eq:H36e}
&\leq C\norm{\vec f}_{L^1(\Omega\cap B)}^2+ C(\Omega,B)\,
\norm{\vec f}_{L^2(\Omega\cap B)} \norm{D\vec v(t,\cdot)}_{L^2(\Omega)},
\end{align}
where we have used \eqref{eq:H29y}, \eqref{eq:H29w}, and \eqref{eq:P178}.
Then by applying Cauchy-Schwarz inequality to \eqref{eq:H36e}, we find
\begin{equation}
\label{eq:H36g}
\norm{D\vec v(t,\cdot)}_{L^2(\Omega)}^2
\leq C(\Omega,B)\left(\norm{\vec f}_{L^1(\Omega\cap B)}^2+
\norm{\vec f}_{L^2(\Omega\cap B)}^2\right)
\leq C(\Omega,B)\norm{\vec f}_{L^2(\Omega)}^2 \quad\forall t>0.
\end{equation}
Therefore, by the weak compactness and \eqref{eq:E29a},
we conclude that there exists an increasing sequence
$\set{t_m}_{m=1}^\infty$ tending to infinity such that
$D\vec v(t_m,\cdot) \wto D\vec u$ weakly in $L^2(\Omega)^N$
so that \eqref{eq:E36} holds.
Also, by \eqref{eq:H36g} we find $\norm{D\vec u}_{L^2(\Omega)}<\infty$.
By using \eqref{eq:P178} and \eqref{eq:H36g}
it is not hard to verify that for any $\zeta\in C^\infty_c(\bR^2)$,
\begin{align*}
\norm{\zeta \vec v(t,\cdot)}_{W^{1,2}(\Omega)}\leq 
C(\zeta,\Omega,B)\norm{\vec f}_{L^2(\Omega)}<\infty \quad\forall t>0.
\end{align*}
Therefore, we conclude that $\zeta \vec u \in W^{1,2}_0(\Omega)^N$
for all $\zeta\in C^\infty_c(\Omega)$ and thus,
$\vec u \in Y^{1,2}_0(\Omega)^N$.
Consequently, it follows from \eqref{eq:E35}, \eqref{eq:E36}, \eqref{eq:H29y},
and Lemma~\ref{lem:P04} that $\vec u$ is a unique weak solution in
$Y^{1,2}_0(\Omega)^N$ of $L\vec u= -\vec f$.

By arguing the same as in the proof of Theorem~\ref{thm1}, we
get the uniqueness of the Green's matrix in $\Omega$.

We need the following lemma to prove \eqref{eqG-54} and \eqref{eqG-55}.
\begin{lemma}
\label{lem:H04}
Let $\Omega$ be given as in \eqref{eq:LD01}.
Then, for all $y\in\Omega$ and for all $t>0$, we have 
\begin{align}
\label{eq:H17n}
&\norm{D \bar{\vec K}(t,\cdot,y)}_{L^p(B_\rho(y)\cap\Omega)}
\leq C(p)\rho^{2/p-1} \quad \forall \rho>0 \quad \forall p\in [1,4/3),\\
\label{eq:H18p}
&\norm{D \bar{\vec K}(t,\cdot,y)}_{L^2(\Omega\setminus B_r(y))}
\leq C(1+d_y^\mu r^{-\mu})\quad \forall r>0.
\end{align}
\end{lemma}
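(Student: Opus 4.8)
The plan is to imitate the proof of Lemma~\ref{lem:E02}. Writing $\bar{\vec K}(t,\cdot,y)=\int_0^t\vec K(s,\cdot,y)\,ds$, I would use Minkowski's inequality to bring $D_x$ and the spatial norm inside the $s$-integral and then split that integral at $s=\rho^2$ (for \eqref{eq:H17n}) or $s=r^2$ (for \eqref{eq:H18p}). The short-time parts are handled exactly as in \eqref{eq:E31g} and \eqref{eq:E31v}: on $(0,\rho^2)$, H\"older's inequality in $s$ together with \eqref{eq619} gives a contribution $\le C(p)\rho^{2/p-1}$; on $(0,r^2)$, H\"older in $s$ and \eqref{eq615b} (whose exceptional set already contains $(0,r^2)\times(\Omega\setminus B_r(y))$) give a contribution $\le(Cr^{-2})^{1/2}r=C$.

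The genuinely new ingredient is needed for the long-time part, where the Poincar\'e-type exponential decay used in \eqref{eq:E71b} is unavailable and must be replaced by polynomial decay. First I would record the two-sided bound $|\vec K(s,x,y)|\le Cd_y^{\mu}\{\max(\sqrt s,|x-y|)\}^{-2-\mu}$, which follows by applying Lemma~\ref{lem:H03} to ${}^t\!L$ and using the symmetry \eqref{eq:E43t}; integrating in $x$ gives $\int_\Omega|\vec K(s,\cdot,y)|^2\,dx\le C\min(s^{-1},d_y^{2\mu}s^{-1-\mu})$, and the energy identity then yields $\int_t^\infty\!\int_\Omega|D_x\vec K(s,\cdot,y)|^2\,dx\,ds\le C\int_\Omega|\vec K(t,\cdot,y)|^2\,dx\le C\min(t^{-1},d_y^{2\mu}t^{-1-\mu})$. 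For \eqref{eq:H18p} I would decompose $(r^2,\infty)$ dyadically into intervals $(2^kr^2,2^{k+1}r^2)$ and, using $\|D_x\vec K(s,\cdot,y)\|_{L^2(\Omega\setminus B_r(y))}\le\|D_x\vec K(s,\cdot,y)\|_{L^2(\Omega)}$ and Cauchy--Schwarz in $s$, estimate each piece by the bound just obtained: the pieces with $2^kr^2\ge d_y^2$ contribute a convergent geometric series summing to $C$, while the (only finitely many, when $r<d_y$) pieces with $r^2\le2^kr^2<d_y^2$ each contribute $O(1)$, for a total $C\ln_+(d_y/r)\le C\,d_y^{\mu}r^{-\mu}$; adding the short-time term proves \eqref{eq:H18p}.

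For \eqref{eq:H17n} the crude bound $\|D_x\vec K(s,\cdot,y)\|_{L^2(B_\rho(y)\cap\Omega)}\le\|D_x\vec K(s,\cdot,y)\|_{L^2(\Omega)}$ only gives $\int_{\rho^2}^\infty(\cdots)\,ds\le C(1+\ln_+(d_y/\rho))$, which is not of the required form, so the time-decay of $\|D_x\vec K(s,\cdot,y)\|_{L^2(B_\rho(y)\cap\Omega)}$ has to be sharpened. When $2\rho\ge d_y$ the logarithm is $O(1)$, and a boundary Caccioppoli estimate on $B_\rho(y)\cap\Omega$ (using that $\vec K(\cdot,\cdot,y)$ vanishes on $\partial\Omega$) together with the two-sided bound suffices. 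When $2\rho<d_y$, so that $B_{2\rho}(y)\subset\subset\Omega$, I would instead prove $\|D_x\vec K(s,\cdot,y)\|_{L^2(B_\rho(y))}\le C\rho^{\mu}s^{-1-\mu/2}$ for $s>\rho^2$ by combining a Caccioppoli estimate on $B_\rho(y)$ with a subtracted constant, the interior parabolic H\"older estimate of Lemma~\ref{lem:P01}, and the pointwise bounds \eqref{eq618a} and \eqref{eq:H01r}; this decay is integrable in $s$ with no logarithm, so $\int_{\rho^2}^\infty\|D_x\vec K(s,\cdot,y)\|_{L^2(B_\rho(y)\cap\Omega)}\,ds\le C$, and multiplying by $|B_\rho(y)\cap\Omega|^{1/p-1/2}\le C\rho^{2/p-1}$ and adding the short-time term gives \eqref{eq:H17n}.

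The step I expect to be the main obstacle is precisely this last one: obtaining the gradient decay $C\rho^{\mu}s^{-1-\mu/2}$ (and the analogous control near $\partial\Omega$ when $2\rho\ge d_y$) uniformly up to a merely Lipschitz boundary, since for systems no pointwise gradient bound is available and one has to combine the Caccioppoli estimates with the low-dimensional H\"older estimates of Lemma~\ref{lem:P01} and Lemma~\ref{lem:H01} carefully --- this is the same ``treat $\Omega$ as $\bR^2$'' philosophy already used to obtain \eqref{eq618a}--\eqref{eq619}. Once it and the two-sided bound on $\vec K$ are established, the remaining manipulations are routine and parallel to the proof of Lemma~\ref{lem:E02}.
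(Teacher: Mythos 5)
Your treatment of \eqref{eq:H18p} is essentially the paper's argument and is correct. You obtain the key polynomial decay $\int_t^\infty\!\int_\Omega|D_x\vec K(s,\cdot,y)|^2\,dx\,ds\le Cd_y^{2\mu}t^{-1-\mu}$ by applying the energy identity directly to $\|\vec K(t,\cdot,y)\|_{L^2(\Omega)}^2$ together with the two-sided pointwise bound from Lemma~\ref{lem:H03} and the symmetry \eqref{eq:E43t}; the paper derives the same bound \eqref{eq:H23e} with a time cutoff, but this is a cosmetic difference. Your dyadic sum over $(r^2,\infty)$, split at $2^kr^2\lessgtr d_y^2$, gives $I_4\le C(1+\ln_+(d_y/r))$, which is indeed dominated by $C(1+d_y^\mu r^{-\mu})$; the paper's version uses only the $d_y^{2\mu}t^{-1-\mu}$ bound and gets $Cd_y^\mu r^{-\mu}$ directly. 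Both are fine.

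For \eqref{eq:H17n}, however, there is a genuine gap in the case $2\rho<d_y$. After Minkowski, H\"older in $x$, and Cauchy--Schwarz in $s$ over the slabs $(j\rho^2,(j+1)\rho^2)$, you only need to bound the space-time quantities
$(I_{2,j})^2=\int_{j\rho^2}^{(j+1)\rho^2}\!\int_{B_\rho(y)\cap\Omega}|D_x\vec K|^2\,dx\,dt$.
You instead aim for a pointwise-in-$s$ estimate $\|D_x\vec K(s,\cdot,y)\|_{L^2(B_\rho)}\le C\rho^\mu s^{-1-\mu/2}$ via Caccioppoli with a subtracted constant together with Lemma~\ref{lem:P01}. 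This does not close as stated: the parabolic Caccioppoli requires a single space-time constant $\vec c$, so after subtraction you must also control the oscillation of $\vec K(\cdot,y,y)$ in time across the reference cylinder, i.e.\ a H\"older-in-time estimate, and Lemma~\ref{lem:P01} (and Lemma~\ref{lem:H01}) supply H\"older only in the spatial variable. Fixed-time local gradient bounds for systems with merely measurable coefficients are also not available off the shelf. The paper sidesteps all of this by applying the packaged parabolic Campanato decay \eqref{eq6.18v} of Lemma~\ref{lem:H01} directly to the slab: since $P_\rho^-((j+1)\rho^2,y)=(j\rho^2,(j+1)\rho^2)\times(B_\rho(y)\cap\Omega)$, one takes the $j$-dependent reference radius $r=\sqrt{(j+1)/2}\,\rho$ so that
\[
(I_{2,j})^2\le C\Bigl(\frac{\rho}{r}\Bigr)^{2+2\mu}\int_{P_r^-(2r^2,y)}|D_x\vec K|^2\le C\Bigl(\frac{\rho}{r}\Bigr)^{2+2\mu}\!\cdot Cr^{-2}=C\rho^{-2}(j+1)^{-2-\mu},
\]
where the reference cylinder $P_r^-(2r^2,y)=(r^2,2r^2)\times(B_r(y)\cap\Omega)$ lies in $\{t>r^2\}\cup\{|x-y|>r\}$ so that \eqref{eq615b} applies. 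Summing $C(p)\rho^{2/p}\sum_j(j+1)^{-1-\mu/2}\rho^{-1}$ then yields $I_2\le C(p)\rho^{2/p-1}$ with no logarithm. You already had \eqref{eq6.18v} at your disposal; the missing observation is the choice of reference radius growing like $\sqrt{j}$, which converts the known space-time gradient bound \eqref{eq615b} into the required algebraic decay in $j$.
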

\begin{proof}
We proceed similarly as in the proof of Lemma~\ref{lem:E02}.
Let us begin by proving \eqref{eq:H17n} first.
By Minkowski's inequality, we have
\begin{equation*}
\left(\int_{B_\rho(y)\cap \Omega}\!\!\!
|D_x\bar{\vec K}(t,x,y)|^p dx\right)^{1/p}
\leq \int_0^{\rho^2}+\int_{\rho^2}^\infty
\left(\int_{B_\rho(y)\cap \Omega}\!\!\!|D_x \vec K(s,x,y)|^p dx\right)^{1/p}ds
\coloneqq I_1+I_2.
\end{equation*}
Then, by H\"older's inequality and \eqref{eq619}, we have
\begin{equation}
\label{eq:H15d}
I_1\leq \left(\int_0^{\rho^2}\!\!\!\int_{B_\rho(y)\cap \Omega}
|D_x \vec K(s,x,y)|^p\,dx\,ds\right)^{1/p} \rho^{2(1-1/p)}
\leq C(p) \rho^{-1+2/p}.
\end{equation}
On the other hand, by using H\"older's inequality
\begin{equation}
\label{eq:H16m}
I_2 \leq C(p) \rho^{2/p}\sum_{j=1}^\infty
\left(\int_{j\rho^2}^{(j+1)\rho^2}\!\!\!\int_{B_\rho(y)\cap \Omega}
|D_x \vec K(s,x,y)|^2\,dx\,ds\right)^{1/2}
\coloneqq C(p)\rho^{2/p} \sum_{j=1}^\infty I_{2,j}.
\end{equation}
By setting $r=\sqrt{(j+1)/2}\,\rho$ in \eqref{eq6.18v}
and using \eqref{eq615b}, we estimate
\begin{align}
\nonumber
(I_{2,j})^2 &=\int_{P_\rho^-(2r^2,y)} |D_x \vec K(t,x,y)|^2 dx\,dt
\leq C\left(\frac{\rho}{r}\right)^{2+2\mu}
\int_{P_r^-(2r^2,y)}|D_x \vec K(t,x,y)|^2\,dx\,dt\\
\nonumber
&\leq C\left(\frac{\rho}{r}\right)^{2+2\mu}
\int_{(0,\infty)\times\Omega\setminus(0,r^2)\times B_r(y)}
|D_x \vec K(t,x,y)|^2\,dx\,dt\\
\label{eq:H17b}
&\leq C\rho^{2+2\mu} r^{-4-2\mu}= C \rho^{-2}(j+1)^{-2-\mu}.
\end{align}
Therefore, by combining \eqref{eq:H16m} and \eqref{eq:H17b}, we find
\begin{equation}
\label{eq:H18c}
I_2\leq C(p) \rho^{2/p-1}\sum_{j=1}^\infty (j+1)^{-1-\mu/2}=C(p)\rho^{2/p-1},
\end{equation}
and thus, \eqref{eq:H17n} follows from \eqref{eq:H15d} and \eqref{eq:H18c}.

Next, we turn to the proof of \eqref{eq:H18p}.
As before, Minkowski's inequality yields
\begin{align*}
\left(\int_{\Omega\setminus B_r(y)}\!\!\!
|D_x\bar{\vec K}(t,x,y)|^2 dx\right)^{1/2} &\leq \int_0^{r^2}+\int_{r^2}^\infty
\left(\int_{\Omega\setminus B_r(y)}\!\!\!
|D_x \vec K(s,x,y)|^2 dx\right)^{1/2}ds\\
&\coloneqq I_3+I_4.
\end{align*}
Then, by H\"older's inequality and \eqref{eq615b}, we have
\begin{equation}
\label{eq:H23f}
I_3\leq \left(\int_0^{r^2}\!\!\!\int_{\Omega\setminus B_r(y)}
|D_x \vec K(s,x,y)|^2\,dx\,ds\right)^{1/2} r
\leq C.
\end{equation}
We need the following inequality to estimate $I_4$:
\begin{equation}
\label{eq:H23e}
I_5(t)\coloneqq 
\int_t^\infty\!\!\!\int_\Omega |D_x \vec K(s,x,y)|^2 dx\,ds
\leq C d_y^{2\mu} t^{-1-\mu}\quad \forall r>0\quad \forall t>0.
\end{equation}
Let us momentarily assume that \eqref{eq:H23e} holds and proceed
similarly as in \eqref{eq:H17b} to get
\begin{align}
\nonumber
I_4 \leq
\left(\int_{r^2}^\infty\!\!\!\int_{\Omega\setminus B_r(y)}
\!\!\!|D_x \vec K(s,x,y)|^2 dx\,ds\right)^{1/2}
&\leq \sum_{j=0}^\infty 2^{j/2}r
\left(\int_{2^j r^2}^{2^{j+1} r^2}\!\!\!\int_\Omega
|D_x \vec K(s,x,y)|^2 dx\,ds\right)^{1/2}\\
\label{eq:H24e}
&\leq Cd_y^\mu r^{-\mu} \sum_{j=0}^\infty 2^{-j\mu/2} \leq C d_y^\mu r^{-\mu}.
\end{align}
By combining \eqref{eq:H23f} and \eqref{eq:H24e}, we obtain \eqref{eq:H18p}.
It only remains to prove \eqref{eq:H23e}.
Note that by \eqref{eq:H01r} and \eqref{eq:E43t} we have
\begin{equation}
\label{eq:H31o}
|\vec K(s,x,y)|\leq C d_y^\mu\{\max(s^{1/2},|x-y|)\}^{-2-\mu}\quad
\forall x\neq y\quad\forall s>0.
\end{equation}
Let $\zeta\in C^\infty(\bR)$ be such that $0\leq \zeta\leq 1$,
$\zeta\equiv 1$ on $[t,\infty)$, $\zeta\equiv 0$ on $(-\infty,t/2]$,
and $|\zeta'| \leq 4/t$.
Then, by the energy inequality (see e.g., \cite[\S III.2]{LSU})
and \eqref{eq:H31o}, we have
\begin{align*}
I_5(t)&\leq
\int_0^\infty\!\!\!\int_\Omega \zeta(s)|D_x \vec K(s,x,y)|^2\,dy\,ds
\leq C \int_0^\infty\!\!\!\int_\Omega |\zeta'(s)| |\vec K(s,x,y)|^2\,dy\,ds\\
&\leq Ct^{-1}d_y^{2\mu} \int_{t/2}^t \left(\int_{|x-y|<\sqrt{s}} s^{-2-\mu}\,dy
+\int_{|x-y|\geq \sqrt{s}} |x-y|^{-4-2\mu}\,dy\right)\,ds\\
&\leq Ct^{-1}d_y^{2\mu} \int_{t/2}^t s^{-1-\mu}\,ds \leq C d_y^{2\mu}
t^{-1-\mu}.
\end{align*}
This completes the proof of the lemma.
\end{proof}

We now prove \eqref{eqG-54} and \eqref{eqG-55}.
To prove \eqref{eqG-54}, first recall that \eqref{eq:E01} holds.
By \eqref{eq615c}, we find that \eqref{eq:E02} remains valid.
Assume that $\vec \phi\in C^\infty_c(\Omega)^N$ is supported in 
$B_R(y)\cap\Omega$.
By \eqref{eq:g01} and \eqref{eq:H17n}, we find that there is a sequence
$\{t_m\}_{m=1}^\infty$ tending to infinity such that
\begin{equation*}
D\vec{\bar K}(t_m,\cdot,y)\wto D \vec G(\cdot,y)\quad
\text{weakly in } L^p(B_R(y)\cap\Omega)^{N\times N}\text{ for some }p>1.
\end{equation*}
Therefore, we find
\begin{equation}
\label{eq:H23w}
\lim_{m\to\infty} \int_\Omega A^{\alpha\beta}_{ij}D_\beta
\bar K_{jk}(\cdot,y) D_\alpha \phi^i
=\int_\Omega A^{\alpha\beta}_{ij}D_\beta G_{jk}(\cdot,y) D_\alpha \phi^i.
\end{equation}
By combining \eqref{eq:E01}, \eqref{eq:E02}, and \eqref{eq:H23w}, we obtain
\eqref{eqG-54}.
To prove \eqref{eqG-55}, first observe that \eqref{eq:H18p} yields
\begin{equation}
\label{eq:H35p}
\norm{D\vec G(\cdot,y)}_{L^2(\Omega\setminus B_r(y))} \leq C(d_y,r)<\infty
\quad \forall r>0.
\end{equation}
By using \eqref{eq:H01w} and \eqref{eq:H35p} and proceeding
similarly as in \eqref{eq:E21v}, we obtain
\begin{equation*}
\norm{D((1-\eta)\vec G(\cdot,y))}_{L^2(\Omega)}\leq
C(\eta,d_y)<\infty.
\end{equation*}
It follows from \cite[Theorem~2.7]{CDK} that for any
$\zeta\in C^\infty_c(\bR^2)$, we have
\begin{equation*}
\vec F(t,\cdot)\coloneqq\zeta(1-\eta)\bar{\vec K}(t,\cdot,y)
\in W^{1,2}_0(\Omega)^{N\times N}\quad\forall t>0.
\end{equation*}
Clearly, $\lim_{t\to\infty}\vec F(t,\cdot)=\zeta(1-\eta)\vec G(\cdot,y)$.
Moreover, by utilizing \eqref{eq:E43x}, \eqref{eq:H01r}, and \eqref{eq:H18p},
it is not hard to verify
$\norm{\vec F(t,\cdot)}_{W^{1,2}(\Omega)}\leq C(\zeta,\eta,d_y)<\infty$
for all $t>0$.
Then, by a similar argument as in Section~\ref{sec3.2}, we get 
$\zeta(1-\eta)\vec G(\cdot,y)\in W^{1,2}_0(\Omega)^{N\times N}$.
We have proved \eqref{eqG-55}.

Finally, notice that with \eqref{eqG-55} at hand,
we may proceed similarly as in Section~\ref{sec3.3} to conclude that
that $D\vec G(\cdot,y)$ and $D\vec G(x,\cdot)$ belong to
$L^p(B_r(y)\cap\Omega)$ and $L^p(B_r(x)\cap\Omega)$, respectively,
for all $r>0$ and $p\in [1,2)$.
We have already seen in \eqref{eq:H01w} that
that $\vec G(\cdot,y)$ and $\vec G(x,\cdot)$ belong to
$L^p(B_r(y)\cap\Omega)$ and $L^p(B_r(x)\cap\Omega)$, respectively,
for all $r>0$ and $p\in [1,\infty)$.
This completes the proof of the theorem.

\mysection{Remark on fundamental matrices} \label{sec5}
In this section, we introduce a result of Auscher et al. \cite{AMcT}
regarding construction of a fundamental matrix in $\bR^2$.
Let $\mathcal H^1(\bR^2)$ be the usual Hardy space in $\bR^2$ and
$C_0(\bR^2)$ be the space of continuous functions on $\bR^2$
vanishing at infinity.
For $x,y\in \bR^2$, $x\neq y$, define
\begin{equation}
\vec \Gamma(x,y)\coloneqq
\int_0^1 \vec K(t,x,y)\,dt+ \int_1^\infty (\vec K(t,x,y)-\vec K(t,x,x))\,dt.
\end{equation}
The following theorem appears in \cite{AMcT} as Theorem~3.16, where $L$ is
assumed to be an elliptic operator with complex coefficients.
With appropriate changes, the same proof carries over.
\begin {theorem}[Auscher-McIntosh-Tchamitchian]
Let the operator $L$ satisfy \eqref{eqP-02} and \eqref{eqP-03}.
Then for all $x\in \bR^2$, $\vec\Gamma(x,\cdot) \in BMO$ and
for $\vec f=(f^1,\ldots,f^N)^T\in \mathcal H^1(\bR^2)^N$,
the function defined by
\begin{equation*}
T\vec f(x)\coloneqq \int_{\bR^2} \vec\Gamma(x,y)\vec f(y)\,dy
\end{equation*}
belongs to $C_0(\bR^2)^N$.
The linear map thus defined is continuous from $\mathcal H^1(\bR^2)^N$
into $C_0(\bR^2)^N$.
Moreover, for all $\vec f\in \mathcal H^1(\bR^2)^N$,
$\vec u(x)\coloneqq T \vec f(x)$ satisfies
$\norm{D \vec u}_{L^2(\bR^2)}<\infty$ and is a weak solution of
$L\vec u= -\vec f$ in the sense of \eqref{eq:P08c}.
\end{theorem}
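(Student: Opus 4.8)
The plan is to read off every assertion from the heat-kernel representation of $\vec\Gamma$, using only the Gaussian-type off-diagonal bound for $\vec K$ on $\bR^2$ (the estimate \eqref{eq618a}, valid here since the distance restriction is vacuous on the whole plane; see \cite{CDK}, \cite{AMcT}), the energy inequality, and the interior parabolic Hölder estimate of Lemma~\ref{lem:P01}, applied via the symmetry \eqref{eq:E43t} to both $L$ and ${}^t\!L$. As a preliminary I would record that for fixed $t>0$ the map $y\mapsto\vec K(t,x,y)$ is locally Hölder continuous, with
\[
|\vec K(t,x,y)-\vec K(t,x,y')|\le C\Bigl(\tfrac{|y-y'|}{\sqrt t}\Bigr)^{\!\mu}t^{-1}
\qquad\text{whenever }|y-y'|\le\tfrac14\sqrt t,
\]
together with the analogous off-diagonal version when $x$ is far from $y,y'$; both follow from Lemma~\ref{lem:P01} for ${}^t\!L$, \eqref{eq:E43t}, and \eqref{eq618a}. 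Using \eqref{eq618a} on $\{t\lesssim|x-y|^2\}$ and this Hölder bound on $\{t\gtrsim|x-y|^2\}$ shows that both integrals defining $\vec\Gamma(x,y)$ converge absolutely for $x\ne y$; the subtraction of $\vec K(t,x,x)$ in the second integral is exactly what cancels the non-integrable tail $\int_1^\infty t^{-1}\,dt$ coming from $|\vec K(t,x,x)|\le Ct^{-1}$, and the resulting estimate also gives $\vec\Gamma(x,\cdot)\in L^1_{\loc}(\bR^2)$ for each $x$.

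For the $BMO$ statement I would use $\mathcal H^1$--$BMO$ duality and test against atoms: let $\vec a$ be an $\mathcal H^1(\bR^2)^N$-atom supported in a ball $B_\rho(z_0)$ with $\|\vec a\|_\infty\le|B_\rho(z_0)|^{-1}$ and $\int\vec a=0$, and set $\vec v(t,x):=\int_{\bR^2}\vec K(t,x,y)\vec a(y)\,dy$, a global weak solution of $\vec v_t-L\vec v=0$ with $\vec v(0,\cdot)=\vec a$. Because $\int\vec a=0$ the term $\vec K(t,x,x)$ drops out, so by Fubini $\int_{\bR^2}\vec\Gamma(x,y)\vec a(y)\,dy=\int_0^\infty\vec v(t,x)\,dt$, and it suffices to bound $\int_0^\infty|\vec v(t,x)|\,dt$ by an absolute constant, uniformly in $x$; by Fefferman duality this gives $\|\vec\Gamma(x,\cdot)\|_{BMO}\le C$ uniformly in $x$, and the same computation gives $\|T\vec f\|_{L^\infty(\bR^2)}\le C\|\vec f\|_{\mathcal H^1}$ on finite atomic combinations, hence on all of $\mathcal H^1$, so that $T\vec f(x)=\langle\vec\Gamma(x,\cdot),\vec f\rangle$ (which agrees with $\int\vec\Gamma(x,y)\vec f(y)\,dy$ whenever the latter converges absolutely) is well defined. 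Inside the $y$-integral I would replace $\vec K(t,x,y)$ by $\vec K(t,x,y)-\vec K(t,x,z_0)$ (legitimate by the mean-zero property) and split the $t$-integration into $t\lesssim\rho^2$, $\rho^2\lesssim t\lesssim|x-z_0|^2$, and $t\gtrsim\max(\rho^2,|x-z_0|^2)$, distinguishing whether $x$ lies near or far from $B_\rho(z_0)$: on the small-time ranges one uses \eqref{eq618a} directly, while on the large-time range one uses the Hölder bound above, which supplies the decisive gain $t^{-1-\mu/2}$ making $\int^\infty$ converge. In the ``far'' case the same computations yield an extra factor $(\rho/|x-z_0|)^\mu$, which tends to $0$ as $|x|\to\infty$ and hence gives $T\vec a(x)\to0$ at infinity.

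For the remaining two assertions I would work again with an atom $\vec a$. The energy inequality applied to $\vec v$ gives $\int_0^\infty\|D\vec v(t,\cdot)\|_{L^2(\bR^2)}^2\,dt\le(2\lambda)^{-1}\|\vec a\|_{L^2}^2<\infty$, and the refined large-time decay $\|\vec v(t,\cdot)\|_{L^2(\bR^2)}\le C\rho^{\mu}t^{-(1+\mu)/2}$ (obtained from the mean-zero cancellation and the Hölder bound, as above) together with Cauchy--Schwarz over dyadic time-intervals yields $\int_0^\infty\|D\vec v(t,\cdot)\|_{L^2(\bR^2)}\,dt\le C$; hence $D\,T\vec a=\int_0^\infty D\vec v(t,\cdot)\,dt\in L^2(\bR^2)$. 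Integrating the weak identity $\int_{\bR^2}v^i_t\phi^i+\int_{\bR^2}A^{\alpha\beta}_{ij}D_\beta v^j D_\alpha\phi^i=0$ in $t$ over $(0,\infty)$, with $\vec v(t,\cdot)\to0$ in $L^2$ as $t\to\infty$ and $\vec v(t,\cdot)\to\vec a$ in $L^2$ as $t\to0^+$, shows that $T\vec a$ is a weak solution of $L(T\vec a)=-\vec a$ in the sense of \eqref{eq:P08c}; since $\vec a\in L^\infty$ has compact support, local Hölder continuity of $T\vec a$ then follows from the two-dimensional interior regularity for elliptic systems (cf.\ \cite{Morrey}), and combined with the decay at infinity from the previous step this gives $T\vec a\in C_0(\bR^2)^N$. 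Finally, finite linear combinations of atoms being dense in $\mathcal H^1(\bR^2)^N$, the uniform bound $\|T\vec f\|_{L^\infty}\le C\|\vec f\|_{\mathcal H^1}$ and completeness of $C_0$ extend $T$ to a continuous operator $\mathcal H^1(\bR^2)^N\to C_0(\bR^2)^N$, and the $L^2$-gradient bound together with the weak-solution identity pass to general $\vec f$ by linearity and density.

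I expect the main obstacle to be the two-dimensional loss of decay: $\int_0^\infty\vec K(t,x,y)\,dt$ itself diverges, which is precisely why there is no genuine fundamental solution, so every bound must be squeezed out of the mean-zero cancellation of the atom combined with the quantitative Hölder continuity of $\vec K$. The delicate point is the case analysis that pairs the $t^{-1-\mu/2}$ tails correctly with the spatial scale $\rho$ of the atom across the near/far and small-/large-time regimes, and in particular the control of $\int_0^\infty\|D\vec v(t,\cdot)\|_{L^2}\,dt$ (rather than the much easier $\int_0^\infty\|D\vec v(t,\cdot)\|_{L^2}^2\,dt$), which is what forces the use of the sharper $L^2$-decay of $\vec v(t,\cdot)$ and a summation over dyadic time intervals.
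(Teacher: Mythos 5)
The paper does not actually prove this theorem; it explicitly defers to Auscher--McIntosh--Tchamitchian \cite{AMcT} (their Theorem~3.16), remarking only that ``with appropriate changes, the same proof carries over'' to the real systems setting. Your sketch is a faithful reconstruction of that argument --- the heat-semigroup representation of $\vec\Gamma$, atomic testing with mean-zero cancellation against $\vec K(t,x,\cdot)$, the parabolic H\"older gain in $t$ supplying the integrable tail $t^{-1-\mu/2}$, the refined $L^2$-decay $\|\vec v(t,\cdot)\|_{L^2}\lesssim\rho^\mu t^{-(1+\mu)/2}$ together with dyadic Cauchy--Schwarz to control $\int_0^\infty\|D\vec v(t,\cdot)\|_{L^2}\,dt$, and integration of the parabolic weak identity in $t$ to obtain the elliptic weak form --- so it matches, in outline, the approach the paper adopts by reference, and I see no gap in the plan.
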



\end{document}